\newcounter{citedtheorems}
\newtheorem{defn}{Definition}[section]
\newtheorem{defns}[defn]{Definitions}
\newtheorem{theorem}[defn]{Theorem}
\newtheorem{theorem-m1}[defn]{Main Theorem}
\newtheorem{theorem-c1}[defn]{Central Theorem}
\newtheorem{ex}[defn]{Exercise}
\newcommand{\ord}{\operatorname{Or}}
\newcommand{\tr}{\operatorname{Tr}}
\newcommand{\cs}{\mathbf{s}}
\newcommand{\ma}{\mathbf{a}}
\newcommand{\vp}{\varphi} 
\newcommand{\mct}{\mathcal{T}}
\newcommand{\tlf}{\trianglelefteq}
\newcommand{\ts}{\mathbf{S}}
\newcommand{\br}{\vspace{3mm}}
\newcommand{\sbr}{\vspace{1mm}}
\newcommand{\prt}{\operatorname{Par}}
\newcommand{\de}{\mathcal{D}}
\newcommand{\lost}{\L os' }
\newcommand{\mc}{\mathcal{C}}
\newcommand{\mb}{\mathbf{b}}
\newcommand{\ml}{\mathcal{L}}
\newcommand{\lgn}{\operatorname{lgn}}
\newcommand{\maxdom}{\operatorname{maxdom}}
\newcommand{\rstr}{\upharpoonright}
\newcommand{\vv}{\mathbf{V}}
\newtheorem{theorem-kl}[defn]{Key Lemma}
\newtheorem*{theorem-x}{Theorem}
\newtheorem*{theorem-m}{Main Theorem}
\newtheorem*{theorem-n}{Main Theorem}
\newtheorem*{cor-x}{Corollary}
\newtheorem*{lemma-x}{Lemma}
\newtheorem*{concl-x}{Conclusion}
\newtheorem*{claim-x}{Claim}
\newtheorem*{thm-r}{Theorem \ref{concl:sop2-max}}
\newtheorem*{thm-q}{Theorem \ref{theorem:p-t}}
\newtheorem*{claim-s}{Claim \ref{m1-sat}}
\newtheorem{mqst}[defn]{Central question}
\newtheorem{thm-lit}[citedtheorems]{Theorem}
\newtheorem{defn-lit}[citedtheorems]{Definition}
\newtheorem{fact}[defn]{Fact}
\newtheorem{cor}[defn]{Corollary}
\newtheorem{propn}[defn]{Proposition}
\newtheorem{concl}[defn]{Conclusion}
\newtheorem{conv-r}[defn]{Conventions and Remarks}
\newtheorem{lemma}[defn]{Lemma}
\newtheorem{obs}[defn]{Observation}
\newcommand{\mcf}{\mathcal{F}}
\newcommand{\mcs}{\mathscr{C}}
\newcommand{\xp}{\mathfrak{p}}
\newcommand{\xt}{\mathfrak{t}}
\newcommand{\thistheoremname}{}
\newtheorem*{genericthm*}{\thistheoremname}
\newenvironment{namedthm*}[1]
  {\renewcommand{\thistheoremname}{#1}%
   \begin{genericthm*}}
  {\end{genericthm*}}
\newcommand{\bc}{\mathbf{c}}
\newcommand{\cf}{\operatorname{cof}}
\newcommand{\dom}{\operatorname{dom}}
\newcommand{\lcf}{\operatorname{lcf}}
\newcommand{\mch}{\mathcal{H}}
\newtheorem{qst}[defn]{Question}
\theoremstyle{definition}
\newtheorem{expl}[defn]{Example}
\newtheorem*{const}{Construction}
\title{Notes on cofinality spectrum problems}
\author{D. Casey and M. Malliaris}
\address{Group in Logic, University of California, Berkeley}
\email{caseydj@berkeley.edu}
\address{Department of Mathematics, University of Chicago} 
\email{mem@math.uchicago.edu}
\date{Version of \today.}
\begin{document}

%\subjclass{03C20, 03C45, 03E17}

\maketitle

\vspace{3mm}

These notes are based on Appalachian Set Theory lectures given by M. Malliaris on November 5, 2016 with D. Casey as the 
official scribe.   
The aim of the lectures was to present the setup and some key arguments of  ``Cofinality spectrum problems in model theory, set theory and 
general topology'' by Malliaris and Shelah \cite{MiSh:998}. 

%\tcb{Rather than giving the history of the questions, which 
%is well documented elsewhere, we focus on transmitting intuition and proofs.}

Each section begins with a short abstract. The reader looking for a \emph{very} brief overview may begin with these. The sections 
give more detail, closely following the line of \cite{MiSh:998}. Our aim is to explain in some sense where the proofs come from. 
These notes are complementary to the paper \cite{MiSh:998}; at times we refer there for full details of a definition or proof.   We hope the 
reader will get a sense of the territory around the program of Keisler's order, and also of how much interesting work remains to be done, and 
may be inspired to look into related open problems \cite{MiSh:1069}.  

In these notes, \emph{all languages are countable and all theories are complete} unless otherwise stated. 

\setcounter{tocdepth}{1}
\tableofcontents

\newpage

\section{Dividing lines}

\begin{quotation}
\noindent 
\emph{One of the main aims of model-theoretic classification theory has been to find \emph{dividing lines} among the first order theories, 
and alongside this to prove structure theorems explaining why theories on one side of the line are in some sense simple or tame, while those on the 
other side are in a complementary sense complex or wild. Not all interesting properties are dividing lines, but certainly it is a strong 
recommendation and helps to find sharp theorems.} 
\end{quotation}

\br

Dividing lines are a rather remarkable phenomenon.\footnote{On the thesis of looking for dividing lines, the reader may consult the original 
\emph{Classification Theory} \cite{Sh:a} or the recent discussion in Shelah \cite{Sh:xx} p. 5.} Not so many are known, 
but the few that have been discovered bring great clarity.  
A foundational example for modern model theory has been the dividing line of stability/instability.\footnote{See \emph{Classification Theory} chapters II-III. The Stone space $\mathbf{S}(M)$ of a model $M$ is the set of ultrafilters on the Boolean algebra of $M$-definable sets -- in other words, the set of  complete types over $M$. Call a theory $T$ \emph{stable in $\lambda$} if for all $M \models T$ of size $\lambda$, $|M| = |\ts(M)|$. 
If for \emph{some} $M \models T$ we have $\lambda = |M| < |\ts(M)|$, we call $T$ \emph{unstable in $\lambda$}.  
If $M$ is an algebraically closed field, $|\ts(M)| = |M|$; if $M = (\mathbb{Q}, <)$ there are types for each Dedekind cut, among others, so $|\ts(M)| > |M|$.  
By definition a given $T$ is either stable or unstable in a given $\lambda$, but the remarkable fact is that varying $\lambda$ a gap appears:  Shelah proved that any $T$ is either ``unstable,'' meaning unstable in all $\lambda$, or ``stable,'' meaning stable in all $\lambda$ such that $\lambda^{|T|} = \lambda$ -- ignoring cases where cardinal arithmetic might give a false positive. }  Examples of stable theories include algebraically closed fields of fixed characteristic and  free groups on a fixed finite number of generators; 
examples of unstable theories include 
random graphs, linear orders, and real closed fields.  The stable theories have by now a beautiful structure theory, but most theories, 
including many of great interest, are unstable.  What is the right approach to classifying them?

\section{Keisler's order}
\begin{quotation}
\noindent 
\emph{An old open problem about saturation of regular ultrapowers may be understood as giving a powerful framework for searching for dividing lines (including stability) in a systematic way.}
\end{quotation}
\br

%As we'll see, work on the paper \cite{MiSh:998} arose from investigating \emph{model theoretic sufficient conditions for maximality in Keisler's order.} 
%In this section we explain Keisler's order.  Recall that we are assuming all theories are complete and countable.  

\begin{defn} 
For $\de$ an ultrafilter on $\lambda$, say that ``$\de$ saturates $M$'' if $N = M^\lambda/\de$ is $\lambda^+$-saturated.\footnote{Recall that this means that any type over any $A \in {[N]^{\leq \lambda}}$ consistent with $N$ is realized in $N$, or if you prefer (recalling $\ml$ is countable) any collection of $\lambda$ definable subsets of $N$ with the finite intersection property has a nonempty intersection in $N$.}
\end{defn}

A priori, the amount of saturation of $N$ is a function both of the ultrafilter $\de$ and of the amount of saturation of the model $M$. 
The following lemma of Keisler explains the importance of \emph{regular ultrafilters} for model theorists: the amount of saturation depends on the theory, not the model chosen.  This may be taken as a definition of regular, or see below. 

\begin{propn}[\cite{K67}, Cor. 2.1a] \label{k:prop}
If $\mathcal{D}$ is a regular ultrafilter on $\lambda$ and $M \equiv N$ in a countable language, $M^{\lambda} / \mathcal{D}$ is $\lambda^+$-saturated if and only if $N^{\lambda} / \mathcal{D}$ is {$\lambda^+$-saturated}.
\end{propn}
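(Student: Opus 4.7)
The plan is to extract from any potential type in $N^\lambda/\de$ a purely combinatorial invariant---its \los map into $\de$---whose admitting a multiplicative refinement governs realization, and then to transfer this invariant to $M^\lambda/\de$ using only elementary equivalence on finitely many existential sentences at a time. By symmetry, it suffices to show that $\lambda^+$-saturation of $M^\lambda/\de$ implies the same for $N^\lambda/\de$.

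First I would use regularity to fix a family $\{X_\alpha : \alpha < \lambda\} \subseteq \de$ with each $t < \lambda$ lying in only finitely many $X_\alpha$, and set $F(t) = \{\alpha : t \in X_\alpha\}$. Given a finitely consistent type $p(x) = \{\vp_\alpha(x, \bar{b}_\alpha) : \alpha < \lambda\}$ over $N^\lambda/\de$, with representatives $\bar{b}_\alpha[t] \in N$, define the (monotone decreasing) \los map
\[
\mathcal{A}(s) = \{t < \lambda : N \models \exists x \bigwedge_{\alpha \in s} \vp_\alpha(x, \bar{b}_\alpha[t])\} \in \de \qquad (s \in [\lambda]^{<\omega}).
\]
To transplant $p$ to $M$, for each $t$ consider the existential formulas $\psi_s(\bar{y}) := \exists x \bigwedge_{\alpha \in s} \vp_\alpha(x, \bar{y}_\alpha)$ indexed by the finitely many $s \subseteq F(t)$. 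The finite Boolean combination that records, for each such $s$, whether $t \in \mathcal{A}(s)$ is realized in $N$ by $\langle \bar{b}_\alpha[t] \rangle_{\alpha \in F(t)}$; existentially closing and invoking $M \equiv N$, pick witnesses $\langle \bar{a}_\alpha[t] \rangle_{\alpha \in F(t)}$ in $M$ and extend arbitrarily for $\alpha \notin F(t)$. Setting $\bar{a}_\alpha = \langle \bar{a}_\alpha[t] \rangle_\de$, the \los map $\mathcal{A}'$ of $p'(x) = \{\vp_\alpha(x, \bar{a}_\alpha)\}$ satisfies $\mathcal{A}'(s) \cap \bigcap_{\alpha \in s} X_\alpha = \mathcal{A}(s) \cap \bigcap_{\alpha \in s} X_\alpha$; in particular $\mathcal{A}'(s) \in \de$ and $p'$ is finitely consistent in $M^\lambda/\de$.

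By the $\lambda^+$-saturation hypothesis, $p'$ is realized in $M^\lambda/\de$. I would then invoke the standard equivalence that a type in a regular ultrapower is realized iff its \los map admits a multiplicative refinement: $\mathcal{A}'$ admits such a refinement, and re-intersecting each of its values with $\bigcap_{\alpha \in s} X_\alpha$ yields a multiplicative refinement of $\mathcal{A}$ itself; applying the same equivalence now inside $N^\lambda/\de$ produces a realization of $p$, as required.

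The main obstacle is the ``refinement $\Rightarrow$ realization'' direction of that standard equivalence, where regularity must be used a \emph{second} time. Given a multiplicative refinement $\mathcal{A}''$ of $\mathcal{A}$, one intersects each $\mathcal{A}''(s)$ with $\bigcap_{\alpha \in s} X_\alpha$ so that $\{\alpha : t \in \mathcal{A}''(\{\alpha\})\}$ becomes finite for every $t$; then a single $c_t \in N$ can be chosen to witness the finitely many $\vp_\alpha(c_t, \bar{b}_\alpha[t])$ at once, and $[c_t]_\de$ realizes $p$. Once this combinatorial characterization is in hand, the model-theoretic transfer from $M$ to $N$ rests only on $M \equiv N$ applied to finitely many existential sentences---which is precisely the payoff promised by regularity: saturation of the ultrapower becomes a question about $\de$ and $\mathrm{Th}(M)$, not about the particular model.
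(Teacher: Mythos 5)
Your proof is correct and follows essentially Keisler's original argument, which is what the paper cites and invites the reader to reproduce (Exercise following the proposition, together with the \L o\'s-map-to-multiplicative-refinement correspondence spelled out later in \S 3). You correctly isolate the two uses of regularity---localizing the transfer to finitely many existential formulas at a time via $F(t)$, and localizing the witness-selection in the refinement-implies-realization direction---and the re-intersection with the regularizing family is handled properly in both directions.
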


Thus, if $\de$ is regular, we may simply say ``$\de$ saturates $T$'' if $\de$ saturates some, equivalently every, model of $T$.  
This suggests a means of comparing theories according to the ``likelihood'' that their regular ultrapowers are saturated, 
which is made precise in Keisler's order \cite{K67}. 

\begin{defn}[Keisler's order, 1967] Let $T_1, T_2$ be complete countable theories. 
\[ T_1 \trianglelefteq T_2 \]
if for every regular ultrafilter $\mathcal{D}$, if $\mathcal{D}$ saturates $T_2$ then  $\mathcal{D}$ saturates $T_1$.
\end{defn}

Regularity may also be defined directly: it is a kind of strong incompleteness. The ultrafilter $\de$ on $I$  is regular if there is $\{ A_\alpha : \alpha < |I|\} \subseteq \de$, 
called a regularizing family, such that the intersection of any countably many distinct elements of the family is empty.\footnote{Equivalently, for every $i \in I$,  $| \{ {\alpha} \mid i \in A_{\alpha} \} | < \aleph_0$.  Regular ultrafilters exist on every cardinal, see \cite{ck} Prop. 4.3.5 p. 249; indeed Donder proved that consistently all ultrafilters are regular.}

\begin{ex}
Prove, or read Keisler's proof of, Proposition \ref{k:prop}.
\end{ex}

There is much to say about this extremely interesting order, which frames the proof we are studying.\footnote{The current picture of the structure of Keisler's order can be found in \cite{MiSh:1050}; \cite{mm5} and \cite[\S 2]{MiSh:1030} 
give more information. For more on the set-theoretic side of Keisler's order, see  \cite{MiSh:996}. 
Note that in the instance we are studying here, it was model theory that 
influenced set theory, but there is potential for much in the other direction -- see for instance \cite{MiSh:1050}. }
Briefly, Keisler's order studies the relative complexity of theories according to the difficulty of ensuring their regular 
ultrapowers are saturated. It was known since Shelah 1978 \cite[VI.5]{Sh:a} that the union of the first two classes in $\tlf$ 
is precisely the stable theories; in other words, Keisler's order gives an 
independent way to detect the dividing line at stability. 

So, potentially, Keisler's order may give a way to 
look systematically for dividing lines among the unstable theories.  This was a thesis motivating Malliaris and more recently 
Malliaris and Shelah in very productive joint work on the order. But this is not just of model theoretic interest. 
In this framework ultrafilters are leveraged to compare the complexity  of theories, but in parallel, theories 
reflect the complexity of ultrafilters.

The starting point for the paper \cite{MiSh:998}, the third joint paper of Malliaris and Shelah on Keisler's order, 
was a question about a model-theoretic sufficient condition for 
maximality in Keisler's order.

\section{The maximum class}  \label{s:max}

\begin{quotation}
\noindent 
\emph{Already in 1967, Keisler had shown that his order has a maximum class, which may be characterized set theoretically by a property of ultrafilters, 
called \emph{good}.}
\end{quotation}

\br

In his 1967 paper Keisler had proved the order $\tlf$ had a maximum class, essentially by the following argument. 
(a)  Good regular ultrafilters saturate any [countable] $T$. (b) Some theories, e.g. Peano arithmetic, are saturated by a given regular ultrafilter only if it is good. 
(c) \emph{Thus} the set 
$$\{ T : \mbox{ if $T$ is saturated by a regular ultrafilter $\de$ then $\de$ must be good }\}$$ is nonempty and is the 
maximal, in fact maximum, class in $\tlf$.

\begin{defn}
We say the ultrafilter $\de$ on $\lambda$ is \emph{good} if every monotonic function $f: [\lambda]^{<\aleph_0} \rightarrow \de$ has a multiplicative refinement, 
meaning that: if $f$ satisfies $u \subseteq v \implies f(u) \supseteq f(v)$  for all finite $u,v$, then there is $g: [\lambda]^{<\aleph_0} \rightarrow \de$ 
such that for all finite $u$, $g(u) \subseteq f(u)$, and for all finite $u,v$, $g(u \cup v) = g(u) \cap g(v)$.
\end{defn}

The existence of good regular ultrafilters on any infinite 
$\lambda$ was proved by Keisler \cite{keisler-1} under GCH and unconditionally by Kunen about a decade later \cite{kunen}.

Notation: when dealing with an ultrapower $N = M^I/\de$, we first fix a lifting 
$M^I/\de \rightarrow M^I$, so that for any $a \in N$ and $t \in I$ the projection $a[t]$ is well defined. When $\bar{a} = \langle a_1,\dots, a_n \rangle$ is a tuple in $N$, write $\bar{a}[t]$ for $\langle a_1[t], \dots, a_n[t]\rangle$.  (The choice of lifting must be made in advance, but will not matter.)
%Observe that the choice of elements in the equivalence classes of the ultrapower for this lifting can be made arbitrarily. 

Keisler's argument rests on the following correspondence. 
Let $N = M^I/\de$ be a regular ultrapower, $|I| = \lambda$, 
$p(x) = \{ \vp_\alpha(x;\bar{a}_\alpha) : \alpha < \lambda \rangle$ a type, and {$\{ X_\alpha : \alpha < \lambda \}$} a regularizing family. 
For each finite $u \subseteq \lambda$, consider the map $f$ given by 
\[ u \mapsto \{ t \in I ~: ~\exists x \bigwedge_{\alpha \in u}\vp_\alpha(x;\bar{a}_\alpha[t]) \} \cap \bigcap_{\alpha \in u} X_\alpha. \] 
Then $f$ is monotonic, and by \lost theorem, its range is a subset of $\de$.  

\begin{ex} \label{ex1}
Prove that $p$ is realized in $N$ iff $f$ has a multiplicative refinement.\footnote{This is spelled out in e.g. \cite{mm4} Observation 3.10. Note the  correspondence: a theory is saturated precisely when all the functions which correspond to its types in the manner just described have multiplicative refinements. A priori, this may be fewer than all monotonic functions.}
\end{ex}

As $M$ was arbitrary, Exercise \ref{ex1} explains item (a) from the beginning of the section.  
To prove (b), one needs a theory such that for each given monotonic function, it's possible to build a type 
whose projections have precisely the pattern of incidence that function records; any theory with sufficient coding will do.\footnote{In Keisler's parlance, any 
\emph{versatile formula} will suffice, see \cite[Theorem 6.1]{K67}.}

No model-theoretic characterization of the maximal class was known, i.e. no model-theoretic 
necessary and sufficient condition for theories to be maximal was known; the above discussion suggests the following 
general approach to this (still open) problem.\footnote{On maximality, look at the title of \cite{K67}. Nor is it so 
easy to meaningfully weaken goodness (see for instance Dow's notion of OK). The introduction to \cite{MiSh:1070} goes into this 
question in some detail.) }  
We might consider a reasonably complex, but not obviously already maximal, property $X$ of theories; 
find a property $Y$ of regular ultrafilters which is necessary to saturate such theories; and 
ask if any regular ultrafilter with property $Y$ must be good.
If this can be carried out, both answers are useful; \emph{yes} says $X$-theories are maximal, and \emph{no} gives an interesting weakening of goodness.

\section{A translation}

\begin{quotation}
\noindent 
\emph{The starting point was the question of whether the model-theoretic property $SOP_2$ was sufficient for a theory to be maximal in Keisler's order.  It turns out this can be translated into a question about orders and trees in ultrapowers.}
\end{quotation}

\br
Work on \cite{MiSh:998} began from the question of whether a model-theoretic property\footnote{For completeness, 
\emph{we say the theory $T$ has \emph{$SOP_2$} if for some formula $\vp(\bar{x};\bar{y})$, in some $M \models T$, 
there exist parameters ${\{ \bar{a}_\eta : \eta \in {^{\omega>}2} \}}$ such that for any $n<\omega$ and any distinct 
$\eta_1, \dots, \eta_n \in {^{\omega>}2}$, the set
$ \{ \vp(\bar{x};\bar{a}_{\eta_1}),\dots, \vp(\bar{x};\bar{a}_{\eta_n}) \} $
is consistent if and only if the elements $\eta_1,\dots, \eta_n$ lie on a single branch $($i.e. iff there is $\rho \in {^\omega 2}$
such that $\eta_i \tlf \rho$ for $i=1,\dots,n$$)$.}
An example of $SOP_2$ is given by the formula $\vp(x;y_1,y_2) = y_1 > x > y_2$  in any theory of linear order, but there are more interesting  
examples, such as in the triangle-free random graph.   Why $SOP_2$? See \cite{MiSh:998} 
Discussion 11.12.} called $SOP_2$ was sufficient 
to imply maximality in Keisler's order. In \cite[Lemma 11.6]{MiSh:998}, historically an early part of the paper, a necessary condition for a regular ultrafilter on $\lambda$ 
to saturate any theory with $SOP_2$ was found, called \emph{treetops}. 

\begin{defn} \label{d:treetops}
Say that the regular ultrafilter $\mathcal{D}$ on $\lambda$ has \emph{$\kappa$-treetops} if whenever 
$M = (\mathcal{T}, \tlf)$ is a tree\footnote{By  `tree' we mean here 
a set $\mct$ given with a partial order $\tlf$ such that the set of predecessors of any node is well ordered. 
In slight abuse of notation, we'll eventually use `tree' to refer to ultrapowers of trees, and later, elements of $\mct(\cs)$.} 
and $N = M^\lambda/\de$ its ultrapower, any strictly $\tlf$-increasing sequence of cofinality $<\kappa$ has an upper bound. $\de$ has 
\emph{treetops} if it has $\lambda^+$-treetops. 
\end{defn}

Recalling the end of \S 3, this leads one to ask: 
%\footnote{Note that at this point, 
%a positive answer to Question \ref{q23} would mean that yes, $SOP_2$ is maximal. A negative answer would give an interesting 
%weakening of goodness, but would not \emph{a priori} settle maximality of $SOP_2$, since \cite[Lemma 11.6]{MiSh:998} gives a 
%necessary, not necessary and sufficient, condition.}

\begin{qst} \label{q23} 
Let $\de$ be a regular ultrafilter. 
If $\mathcal{D}$ has treetops, is $\mathcal{D}$ good?
\end{qst}

Meanwhile, we can also translate ``good'' using model theory. 
Shelah had proved in 1978 that the theory of linear order is in the maximal Keisler class \cite[Theorem VI.2.6]{Sh:a}. 
This suggests that we may try to measure an ultrafilter's goodness by its effect on a model of linear order. 
Towards this, let us define the \emph{cut spectrum} of an ultrafilter on $\lambda$. 
We will say a linear order has a $(\kappa_1, \kappa_2)$-pre-cut when there is a strictly increasing 
sequence $\langle a_\alpha : \alpha < \kappa_1 \rangle$ and a strictly decreasing sequence 
$\langle b_\beta : \beta < \kappa_2 \rangle$ with $a_\alpha < b_\beta$ for all $\alpha < \kappa_1$ and all $\beta < \kappa_2$.  
A pre-cut which is not filled is called a cut.

\begin{defn} \label{d:cut}
Let $\de$ be an ultrafilter on $\lambda$. Define its \emph{cut spectrum} as
\[ \mc(\de) = \{ (\kappa_1, \kappa_2) : \kappa_1, \kappa_2 \mbox{ are regular and } \leq \lambda \mbox{ and }(\omega, <)^\lambda/\de
\mbox{ has a $(\kappa_1, \kappa_2)$-cut } \}.\] 
\end{defn}

When the ultrafilter $\de$ is regular, the cut spectrum $\mc(\de)$ has several key properties.  First, 
we may replace $(\omega, <)$ in Definition \ref{d:cut} with any other infinite linear order (e.g. $\mathbb{Q}, \omega_1,...$) and 
the cut spectrum will not change (see the appendix to \cite{MiSh:1069} for a proof).  
Second, the cut spectrum captures saturation:\footnote{Exercise for the reader: prove that regularity of $\de$ ensures the only relevant  
omitted types in a regular ultrapower of $(\omega, <)$ arise as $(\kappa_1, \kappa_2)$-cuts where $\kappa_1, \kappa_2$ are both infinite.} 
$\mc(\de) = \emptyset$ if and only if $\de$ saturates 
$(\omega, <)^\lambda/\de$. 
Finally, the fact that linear order is in the maximal Keisler class means that $\de$ saturates $(\omega, <)$ if and only if $\de$ is good. 

\begin{concl} 
For a regular ultrafilter $\de$, the following are equivalent:  $\mc(\de) = \emptyset$, $\de$ saturates $(\omega, <)$  $($or any other infinite linear order$)$, $\de$ is good. 
\end{concl}

So our Question \ref{q23} becomes:
%\footnote{Question \ref{q4.6} already seems useful.  Even if $\mc(\de) \neq \emptyset$, it could 
%be interesting to see which cuts occur. Moreover, it suggests a way to measure and compare (\S \ref{s:two}).}

\begin{qst} \label{q4.6} 
Let $\de$ be a regular ultrafilter. 
If $\de$ has treetops, is $\mc(\de) = \emptyset$?
\end{qst}

\section{Two cardinals} \label{s:two} 

\begin{quotation}
\noindent 
\emph{We may focus the question about orders and trees by defining two cardinal 
invariants of an ultrafilter, $\xp_\de$ and $\xt_\de$.}
\end{quotation}

\br
We are comparing where a path through a tree fails to have an upper bound, 
and the appearance of a cut.  To make comparison easier, we may define the following.   
For now the names are simply suggestive.

\begin{defns} \label{d:pd-td} For $\mathcal{D}$ a regular ultrafilter on $\lambda$:
\begin{itemize}
\item $\mathfrak{p}_{\mathcal{D}}$ is the minimum $\kappa$ such that there is in $(\omega, <)^\lambda/\de$ a $(\kappa_1, \kappa_2)$-cut with $\kappa = \kappa_1 + \kappa_2$. 

\item $\mathfrak{t}_{\mathcal{D}}$ is the minimum $\kappa$ such that for some tree  $(\mathcal{T}, \tlf)$, there is in 
${(\mct, \tlf)^\lambda/\de}$ a strictly $\tlf$-increasing $\kappa$-indexed sequence with no upper bound. 

\end{itemize}
\end{defns}

%\item $\mathfrak{p}_{\mathbf{s}}$ is the minimum $\kappa$ such that in some order $X_{\mathbf{a}} \in \textrm{Or}(\mathbf{s})$ 
% there is a $(\kappa_1, \kappa_2)$-cut with $\kappa = \kappa_1 + \kappa_2$.
%\item $\mathfrak{t}_{\mathbf{s}}$ is the minimum $\kappa$ such that in some tree $\mathcal{T}_{\mathbf{a}} \in \textrm{Tr}(\mathbf{s})$ there is a 
%strictly $\tlf$-increasing $\kappa$-indexed sequence with no upper bound. 

%Then we can translate the question ``If $\mathcal{D}$ has $\lambda^+$ treetops, is $\mathscr{C}(\mathcal{D}) = \emptyset$?'' to asking about the relationship between these two cardinals.  Note that $\mathcal{D}$ has $\lambda^+$-treetops is equivalent to $\mathfrak{t}_{\mathcal{D}} \geq \lambda^+$.  And $\mathscr{C}(\mathcal{D}) = \emptyset$ is equivalent to $\mathfrak{p}_{\mathcal{D}} \geq \lambda^+$.  Thus proving that $\mathfrak{p}_{\mathcal{D}} \geq \mathfrak{t}_{\mathcal{D}}$ would give us our desired result.

Observe that in this language Question \ref{q4.6} becomes:

% Unused footnote but for whoever reads this: 
%\footnote{Recall that since $\de$ is regular, 
%$\mc(\de)$ is the same when computed for any infinite linear order. So we might have said 
%$\xp_\de$ is the minimum $\kappa$ such that for some infinite linear order $X$, in $X^\lambda/\de$ there is a $(\kappa_1, \kappa_2)$-cut 
%with $\kappa_1 + \kappa_2 = \kappa$.  Likewise, $\xt_\de$ is the 
% minimum $\kappa$ such that the $\de$-ultrapower of some tree $(\mathcal{T}, \tlf)$ there is a 
%strictly $\tlf$-increasing $\kappa$-indexed sequence with no upper bound.  There are interesting spectrum questions here, see e.g. 
%\cite{MiSh:998} Definition 2.9.
%What can be said in general about the set of $\kappa$ corresponding to sizes of cuts, and the set of $\kappa$ corresponding 
%to sizes of unbounded paths, of which our two cardinals are defined to be the minima?}
  
\begin{qst} \label{q5.2}
Can $\mathfrak{p}_{\mathcal{D}} < \mathfrak{t}_{\mathcal{D}}$?
\end{qst}

\section{Warm-up} \label{s:warmup}

\begin{quotation}
\noindent 
\emph{Test your understanding of the two
 fundamental theorems of ultraproducts.}
\end{quotation}

\br

%Recall that we fix a lifting in advance so that projections $a[t]$ are well defined for $a \in M^\lambda/\de$ 
%and $t \in \lambda$. When $\bar{a} = \langle a_0,\dots, a_{\ell-1} \rangle$, write $\bar{a}[t]$ for $\langle a_0[t],\dots, a_{\ell-1}[t]\rangle$.

\begin{theorem-x}[\L o\'{s}'s Theorem] Let $\varphi(\overline{a})$ be a first-order sentence with parameters. 
Then $M^{\lambda} / \mathcal{D} \vDash \varphi(\overline{a})$ if and only if $\{ t < \lambda \mid M \vDash \varphi(\overline{a}[t]) \} \in \mathcal{D}$.
\end{theorem-x}

\begin{expl}
Every bounded, nonempty, definable subset of $(\omega, <)^{\lambda} / \mathcal{D}$ has a minimum element and a maximum element.  
%Let $X = \{ x \in (\omega, <)^{\lambda} / \mathcal{D} \mid \varphi(x; \overline{a}) \}$ be some definable subset.  Then we can express the boundedness and nonemptiness of $X$ in the first-order sentences, $\exists d(\varphi(x; \overline{a}) \Rightarrow x < d)$ and $\exists x(\varphi(x; \overline{a}))$.  Then, by \L o\'{s}'s Theorem, each $X^{\alpha} = \{ x \in (\omega, <) \mid \varphi(x; \overline{a}^{\alpha}) \}$ is a bounded and nonempty subset of $\omega$.  And so there are minimum and maximum $m^{\alpha}$ and $M^{\alpha}$ elements of $X^{\alpha}$.  Then $m = \langle m^{\alpha} \mid \alpha < \lambda \rangle$ is the minimum of $X$, since being the minimum of a set is a first-order property, and likewise for $\langle M^{\alpha} \rangle$.
\end{expl}

\begin{theorem-x}[Ultrapowers commute with reducts]
Suppose $\mathscr{L} \subseteq \mathscr{L}_*$ are languages and  $M_*$ is a $\mathscr{L}_*$-structure. Then: 
$$  \left( M_*^{\lambda} / \mathcal{D} \right) \restriction_{\mathcal{L}} ~  \cong ~ \left( M_* \restriction_{\mathcal{L}} \right)^\lambda / \mathcal{D}.$$
\end{theorem-x}

\begin{ex} \label{ex6.2}
Let $M$ be a countably infinite model with a binary relation $E$, interpreted as an equivalence relation with infinitely many 
infinite classes. What do its ultrapowers look like? In particular, can the classes be of different sizes?\footnote{Consider first expanding  
the model to add a family of bijections between the classes. Does it matter  
whether we forget these bijections before {or} after taking the ultrapower?} 
\end{ex}

We see from this that ultrapowers respect potential as well as actual structure.\footnote{So where is there room for any variation in 
the structure of e.g. regular ultrapowers? What can fly under the radar of such expansions? A major answer is: pseudofinite structure. 
In \ref{ex6.2}, if $E$ has %is an equivalence relation with 
a class of size $n$ for each $n$, then the ultrapower will have many 
infinite classes, but there is no a priori reason they should have the same size: we can't a priori play the same game with internal bijections, 
since two given infinite classes may have different finite sizes almost everywhere.}

\section{Tree notation}

\begin{quotation}
\noindent 
\emph{We fix some notation for dealing with trees.}
\end{quotation}
\br

In many of our arguments, we'll have a discrete linear order $X$ (say, $\mathbb{N}$) 
and we will be interested in a tree $\mathcal{T}$ whose elements are, say, functions from an initial segment of $X$ into a finite Cartesian power $X^k$ 
(possibly satisfying some additional conditions, like monotonicity in one or more coordinates),  
where the tree-order $\trianglelefteq$ is the partial order given by initial segment. 
For any such $c \in \mathcal{T}$ and $n \in \operatorname{dom}(c)$, ~ $c(n)$ is a $k$-tuple. Write $c(n,i)$ for its first coordinate, ... 
$c(n,k-1)$ for its last coordinate.  In cases of interest, $\maxdom(c) = \lgn(c)-1$ will be well defined, and definable.  We will denote concatenation by $c^{\smallfrown} \langle a_0, ..., a_{k-1} \rangle$, i.e. this denotes the partial function that agrees with $c$ on $\dom(c)$ and equals $\langle a_0, ..., a_{k-1} \rangle$ on the successor of $\maxdom(c)$. 

Here are two examples. First, consider ``$\mathcal{T}_1$ is the tree of finite sequences of pairs of natural numbers, strictly increasing in each coordinate,'' 
i.e., $\mathcal{T}_1 = \{ c : c$ is a function from an initial segment of $\mathbb{N}$ to $\mathbb{N} \times \mathbb{N}$ and 
$i < j \leq \maxdom(c)$, $c(i,0) < c(j,0)$ and $c(i,1) < c(j,1)$ $\}$.  Second, consider: ``$\mathcal{T}_2$ is the tree of finite sequences $c$ of pairs of natural numbers such that for all $i < j \leq \max\operatorname{dom}(c)$, 
$c(i,0) < c(j,0) < c(j,1) < c(i,1)$.''  Notice that paths through $\mct_2$ correspond to finite sequences of concentric intervals 
in $\mathbb{N}$.

\section{Symmetric cuts} \label{s:sym}

\begin{quotation}
\noindent 
\emph{As our first evidence that treetops have some control over cuts, we prove there is no symmetric cut below $\xt_\de$.} 
% in a regular ultrapower of linear order, there is no symmetric cut 
%below $\xt_\de$.}
\end{quotation}

\br

\begin{lemma}[c.f. \cite{MiSh:998}, Lemma 2.2] \label{symlem}
If $\mathcal{D}$ has $\lambda^+$-treetops and $\kappa < \lambda^+$ is regular then $(\kappa, \kappa) \notin \mathscr{C}(\mathcal{D})$. 
\end{lemma}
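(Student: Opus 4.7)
The plan is to argue by contradiction using the nested-interval tree $\mathcal{T}_2$ from Section 7. Suppose $(\kappa, \kappa) \in \mcs(\mathcal{D})$, witnessed by an increasing sequence $\langle a_\alpha : \alpha < \kappa \rangle$ and a decreasing sequence $\langle b_\alpha : \alpha < \kappa \rangle$ in $N = (\omega,<)^\lambda / \mathcal{D}$ with $a_\alpha < b_\beta$ for all $\alpha, \beta < \kappa$ and no element of $N$ filling the gap. Let $M = (\mathcal{T}_2, \tlf)$ and $M^* = M^\lambda/\mathcal{D}$. I will construct a strictly $\tlf$-increasing sequence $\langle c_\alpha : \alpha < \kappa \rangle$ in $M^*$ maintaining the invariant that the last pair of $c_\alpha$ (viewed in $N \times N$) is exactly $(a_\alpha, b_\alpha)$; a final application of $\lambda^+$-treetops to this sequence will then produce an upper bound whose last coordinate fills the supposed cut.

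For the construction, begin with $c_0 = \langle (a_0, b_0) \rangle$. At a successor step $\alpha = \gamma+1$, set $c_\alpha = c_\gamma {}^\smallfrown (a_\alpha, b_\alpha)$, a valid $\mathcal{T}_2^*$-extension because $a_\gamma < a_\alpha < b_\alpha < b_\gamma$. At a limit step $\alpha < \kappa \leq \lambda^+$, the sequence $\langle c_\beta : \beta < \alpha \rangle$ has cofinality $\leq \alpha < \lambda^+$, so $\lambda^+$-treetops yields an upper bound $d \in M^*$ of length $n = \lgn(d) \in N$. Consider the internal set $B = \{ i \in N : i < n,\ d(i, 0) < a_\alpha,\ b_\alpha < d(i, 1) \}$. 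By the $\mathcal{T}_2$-nesting condition, $B$ is downward closed in $\{i \in N : i < n\}$, and it contains $m_\beta := \maxdom(c_\beta)$ for each $\beta < \alpha$, because $d(m_\beta) = c_\beta(m_\beta) = (a_\beta, b_\beta)$ and $a_\beta < a_\alpha < b_\alpha < b_\beta$. Hence either $B = \{i : i < n\}$, or by \lost theorem $B = \{i : i < i_0\}$ for a unique $i_0 \in N$ with $m_\beta < i_0$ for every $\beta < \alpha$. Setting $i^* = n$ in the first case and $i^* = i_0$ in the second, define $c_\alpha = (d \restriction i^*) {}^\smallfrown (a_\alpha, b_\alpha)$; this lies in $\mathcal{T}_2^*$ because $d(i^* - 1) \in B$, and $c_\alpha$ strictly extends each earlier $c_\beta$ since $\lgn(c_\alpha) = i^* + 1 > m_\beta + 1 = \lgn(c_\beta)$.

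Having built the whole sequence, apply $\lambda^+$-treetops once more (the cofinality is $\kappa < \lambda^+$) to obtain an upper bound $d^* \in M^*$ of length $n^* \in N$. For each $\alpha < \kappa$, strict extension forces $n^* > \lgn(c_{\alpha+1}) > \lgn(c_\alpha) = m_\alpha + 1$, and then nesting inside $d^*$ yields
\[ a_\alpha = d^*(m_\alpha, 0) < d^*(n^* - 1, 0) < d^*(n^* - 1, 1) < d^*(m_\alpha, 1) = b_\alpha \]
for every $\alpha < \kappa$. Thus $d^*(n^* - 1, 0) \in N$ strictly separates all the $a_\alpha$ from all the $b_\alpha$, contradicting the assumption that the cut is unfilled.

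The main obstacle is the limit step of the construction: the upper bound $d$ supplied by treetops need not itself have its last pair nested around $(a_\alpha, b_\alpha)$, so one cannot simply append the new pair to $d$. The key observation is that the ``good'' positions $B$ form a downward closed internal subset of $\{i \in N : i < n\}$ which contains every previous milestone $m_\beta$, so by \lost theorem $B$ has a definite right endpoint $i^*$ at which we can truncate $d$ before splicing in $(a_\alpha, b_\alpha)$.
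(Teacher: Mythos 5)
Your proof is correct and follows essentially the same strategy as the paper's: thread the pre-cut into a branch through the nested-interval tree $\mathcal{T}_2$, at limit stages use treetops and then truncate at the last good position to avoid overshooting $(a_\alpha,b_\alpha)$, and at the end use treetops once more so that the final pair of the upper bound fills the cut. The only cosmetic difference is that you apply treetops directly to $(\mathcal{T}_2,\trianglelefteq)^\lambda/\mathcal{D}$ rather than first passing through the ambient tree of all finite sequences of pairs, which is a harmless (arguably slightly cleaner) rearrangement; your "downward-closed internal set has a right endpoint" observation is exactly the paper's appeal to pseudofiniteness.
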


\begin{proof} 
Let $M = (\omega, <)$ and let $N = M^\lambda/\de$.  
Suppose for a contradiction that in $N$ we had some symmetric 
[unfilled] cut $(\bar{a}, \bar{b}) = (\langle a_\alpha: \alpha <\kappa \rangle, \langle b_\alpha : \alpha < \kappa \rangle)$. 

Let $(\mct, \tlf)$ be the tree whose elements are finite sequences of pairs of natural numbers (\emph{i.e.} 
functions from some finite initial segment of $\omega$ to $\omega \times \omega$) partially ordered by initial segment. 
Expand $M$ to a model $M^+$ in which this tree is definable and in which the following are also uniformly definable for 
$c \in \mct$: 
\begin{itemize}
\item the length function $\lgn(c) = \maxdom(c)-1$,
\item for each $n \leq \maxdom(c)$, the evaluation function 
$c(n)$,
\item for each $n \leq \maxdom(c)$, the two projection functions $c(n,0)$ and $c(n,1)$,
 \item $\tlf$, the partial order on $\mct$ given by initial segment.
\end{itemize}
(For example, we could let $M^+$ be the hereditarily countable sets 
$(\mch(\omega_1), \epsilon)$ and identify $M$ with $\omega$ in $M^+$.)

Since ultrapowers commute with reducts, there is a parallel expansion of the ultrapower $N$ to $N^+$, in which the parallel tree is 
definable; elements of $\mct^{N^+}$ will have as their domain some initial segment of nonstandard integers.  

Let $\vp(x)$ define the sub-tree of $\mct$ whose branches describe concentric pre-cuts: 
$\{ x \in \mct :$  for all $i < j \leq \maxdom(x)$, $x(i,0) < x(j,0) < x(j,1) < x(i,1) \}$. 
Let $\mct^{N^+}_* = (\mct^{N^+}_*, \tlf)$ be defined by $\vp$ in $N$.
\footnote{We should write $\tlf^{N^+}$, but have not done so for readability.} \footnote{Why make this a two-step process -- first defining 
$\mct$, then defining $\mct_*$? This is mainly expositional: soon, in the general definition of CSPs which this proof motivates, it will be simplest to 
just assume that trees of functions from orders to themselves exist, and then observe that if long sequences in 
these trees have upper bounds, the same is true of non-trivial definable subtrees.} 
%Note that $\mct^{N^+}_*$ is closed under initial segment. 
Observe that by \S \ref{s:warmup}, we have: 
\begin{enumerate}
\item[(i)] (\emph{pseudofiniteness}) ~every $c \in \mct^{N^+}_*$ has a maximal element of its domain, and moreover, 
every nonempty definable subset of $\{ n : n \leq \maxdom(c) \}$ -- in fact, every bounded nonempty definable 
subset of $N$ -- has a greatest and least element, where definable means definable in the expanded model $N^+$, with parameters. 
\item[(ii)] (\emph{concatenation})  ~if $c \in \mct^{N^+}_*$ and $n = \maxdom(c)$ and $a,b \in \omega^{N_*}$ and 
$c(n,0) < a < b < c(n,1)$, then $c^\smallfrown \langle a, b \rangle \in \mct^{N^+}_*$. 
\end{enumerate}
Now let's build our cut $(\bar{a}, \bar{b})$ into a path through the tree. That is, by induction on 
$\alpha < \kappa$ we build a path $\langle c_\alpha : \alpha < \kappa \rangle$ through $\mct^{N^+}_*$ such that for all 
$\alpha < \kappa$, writing $n_\alpha = \max\operatorname{dom}(c_\alpha)$, we have
\[ c_\alpha(n_\alpha, 0) = a_\alpha \textrm{ and } c_\alpha(n_\alpha, 1) = b_\alpha. \]

Case $\alpha = 0$. Take $c_0 = \langle a_0, b_0 \rangle$, using concatenation, as $\emptyset \in \mct^{N^+}_*$. 

Case $\alpha = \beta + 1$. Take $c_\alpha = {c_\beta}~^\smallfrown \langle a_\alpha, b_\alpha \rangle$. 

Case $\alpha$ limit: $\langle c_\beta : \beta < \alpha \rangle$ is a path through $\mct^{N^+}_*$ 
of cofinality $\cf(\alpha) < \kappa < \lambda^+$. 
Apply treetops to find an upper bound $c_* \in \mct^{N^+}$ such that $\beta < \alpha \implies c_\beta \trianglelefteq c_*$. 
Letting $n_* = \max\operatorname{dom}(c_*)$ we have that $\beta < \alpha$ implies 
\[ c_\beta(n_\beta, 0) = c_*(n_\beta, 0) < c_*(n_*, 0) < c_*(n_*, 1) < c_*(n_\beta, 1) =  c_\beta(n_\beta, 1). \]
But in terms of the cut, maybe we overshot: maybe $c_*(n_*, 1) < a_{\alpha}$ or $c_*(n_*, 0) > b_{\alpha}$, 
which would block our next inductive step. Recalling (i), the set 
\[ \{ n \leq n_* :  c_*(n,0) < a_\alpha \textrm{ and } c_*(n,1) > b_\alpha \}.\]
has a maximal element $n_{**}$.  
Note that the definition of $n_{**}$ guarantees that $n_{\beta} < n_{**}$ for all $\beta < \alpha$.
Let $c_\alpha = c_* \upharpoonright_{n_{**}}~^\smallfrown \langle a_\alpha, b_\alpha \rangle$.  

Having constructed $\bar{c} = \langle c_\alpha : \alpha < \kappa \rangle$, we apply treetops one more time 
to find $c_\star$ above $\bar{c}$. Then letting $n_\star = \max\operatorname{dom}(c_\star)$, we see that $\alpha < \kappa$ implies
\[ a_\alpha = c_\alpha(n_\alpha, 0) < c_\star(n_\star, 0) < c_\star(n_\star, 1) < c_\alpha(n_\alpha, 1) = b_\alpha \]
so $c_\star(n_\star,\ell)$ for $\ell = 0, 1$ fill the cut.  This contradiction completes the proof.\footnote{Note that the wording of this as a proof by contradiction is not necessary; we are essentially showing that any such symmetric pre-cut is filled.  Note also the structural information given by the proof. For example, it tells us that given any $(\kappa, \kappa)$-pre-cut $(\bar{a}, \bar{b})$, there is an internal map taking the sequence $\bar{a}$ to the sequence $\bar{b}$.}
\end{proof}

\section{Our true context}  \label{s:CSP}

\begin{quotation}
\noindent 
\emph{Axiomatizing the basic properties we used in the last proof, we arrive to the natural setting for our arguments, called \emph{cofinality 
spectrum problems}.
To each cofinality spectrum problem $\cs$, we associate cardinal invariants $\xp_\cs$ and $\xt_\cs$ and a  
cut spectrum $\mcs(\cs, \xt_\cs)$.}
\end{quotation}

\br

The previous proof suggests the potential strength of the connection between treetops and cuts in ultrapowers of linear orders.  
But notice that this proof used only a few facts about ultrapowers. The proof would work for any elementary 
pair of models $M \preceq N$ in which we were given formulas $\Delta$ defining discrete linear orders in $M$ (so also in $N$) 
provided that: \textit{first}, we could expand the models in parallel to $M^+ \preceq N^+$ with enough set theory to 
define appropriate trees, and \textit{second}, that each order in $N$ defined by a $\Delta$-formula is 
\emph{pseudofinite}, i.e. every bounded, nonempty, definable (even in the expanded language) subset has a first and last element.

This is made formal in a central definition of the paper, ``$\cs = (M, N, M^+, N^+, \Delta)$ is a cofinality spectrum problem,''
see  \cite{MiSh:998} 2.3-2.5.  Although this definition is longer than that of an regular ultrapower, it is in some sense simpler: 
it's just a basic set of requirements on a pair of models, with all our assumptions displayed.

We summarize to fix notation, but encourage the reader to 
 read the full definition in \cite{MiSh:998} before continuing.  
A CSP $\cs$ has a set of \emph{orders} $\ord(\cs)$ and a set of \emph{trees} $\tr(\cs)$.

\begin{enumerate}
\item The orders $\ord(\cs)$:
\begin{enumerate}
\item For any $\vp \in \Delta$ and $\overline{c} \in {^{\ell(z)}N}$, $\varphi(\overline{x}, \overline{y}, \overline{c})$ gives a discrete linear order ``$\leq$'' on the set ``$X$'' $= \{ \vp(\bar{a}, \bar{a}, \bar{c}) : \bar{a} \in {^{\ell(x)} N } \}$.  
Each such order is pseudofinite in $N^+$,  meaning every bounded, nonempty, definable subset has a maximum and minimum element. 

\item Formally, the data of an order $\ma \in \ord(\cs)$  is given by its defining formula and parameter, 
$\vp_\ma(\bar{x}, \bar{y}, \bar{c}_\ma)$, along with the choice of a designated element $d_\ma$ in the ordered set, see below.  
We abbreviate the ordered set as $X_\ma$ and the discrete linear ordering on it as $\leq_\ma$. Note we can define $0_\ma$, 
the least element of $X_\ma$, and the successor and predecessor functions. Call $\ma$ \emph{nontrivial} if $d_\ma$ isn't a finite successor of $0_\ma$.

\item $\ord(\cs)$ is closed under Cartesian products, i.e. for each $\ma \in \ord(\cs)$ there is at least one 
$\mb = \ord(\cs)$ with $X_\mb = X_\ma \times X_\ma$ (we may write $\mb = \ma \times \ma$ when this holds). 
The coordinate projections of such products are definable. 
For at least one nontrivial $\ma$, there is $\mb = \ma \times \ma$ whose ordering interacts reasonably with the order on each factor, 
e.g. it arises from the G\"odel pairing function. 
\end{enumerate}

\noindent Summarizing notation, an order $\ma \in \ord(\cs)$ is 
$\ma = (X_\ma, \leq_\ma, \vp_\ma, \bar{c}_\ma, d_\ma)$.

\br
\item The trees $\tr(\cs)$: 

\begin{enumerate}
\item For each $\ma \in \ord(\cs)$, there is an associated definable tree $\mct_\ma = {(\mct_\ma, \tlf_\ma)}$ consisting of partial functions from the order to itself, definably partially ordered by initial segment.  
\item For each tree $\mct_\ma \in \tr(\cs)$, the following are also uniformly definable: the length of any $b \in \mct_\ma$ 
and its value at any point in its domain; and if $\lgn(b) <_\ma d_\ma$, \emph{see} (1)(b), we have definable {concatenation}, ensuring 
that $b^\smallfrown\langle a \rangle \in \mct_\ma$ 
for any $a \in X_\ma$. 

% if $c \in \mathcal{T}_{\mathbf{a}}$ such that $\max(\textrm{dom}(c)) < d_{\mathbf{a}}$ and $a \in X_{\mathbf{a}}$, then $ there is $c' \in %\mathcal{T}_{\mathbf{a}}$ such that $\textrm{dom}(c') = \textrm{dom}(c) \cup \{x\}$ with $x > \textrm{dom}(c)$ and $c'(x) = a$. 

\end{enumerate}
\noindent Summarizing notation,  
 a tree $\mct \in \ord(\cs)$ is $(\mct_\ma, \tlf_\ma)$ 
for some $\ma \in \ord(\cs)$.

\end{enumerate}

\br
\noindent
Recall that we aim to analyze how the appearance of cuts in one of our distinguished orders 
relate to the existence of unbounded paths in the distinguished trees. 

\begin{defn} For $\mathbf{s}$ a cofinality spectrum problem $($CSP$)$:
\begin{itemize}
\item $\mathfrak{p}_{\mathbf{s}}$ is the minimum $\kappa$ such that in some order $X_{\mathbf{a}} \in \textrm{Or}(\mathbf{s})$ 
 there is a $(\kappa_1, \kappa_2)$-cut with $\kappa = \kappa_1 + \kappa_2$.
\item $\mathfrak{t}_{\mathbf{s}}$ is the minimum $\kappa$ such that in some tree $\mathcal{T}_{\mathbf{a}} \in \textrm{Tr}(\mathbf{s})$ there is a 
strictly $\tlf$-increasing $\kappa$-indexed sequence with no upper bound. 
\end{itemize}
\end{defn}
\noindent
Recalling Definition \ref{d:cut}, define the \emph{cut spectrum}: 
$$\mathscr{C}(\mathbf{s}) = \{(\kappa_1, \kappa_2) \mid \textrm{ there is a } (\kappa_1, \kappa_2)\textrm{-cut in some }X_{\mathbf{a}} \}.$$
When the size of the cut is important, we use the notation 
$$\mathscr{C}(\mathbf{s}, \mu) = \{(\kappa_1, \kappa_2) \mid \textrm{ there is a } (\kappa_1, \kappa_2)\textrm{-cut in some }X_{\mathbf{a}}   
\mbox{ and } \kappa_1 + \kappa_2 < \mu \}.$$
The most important case for us will be $\mcs(\cs, \xt_\cs)$, i.e. the ``cuts below treetops''.

In the case of regular ultrapowers (a main example of CSPs), these definitions correspond to the earlier ones.\footnote{We may build a 
CSP from an ultrapower of linear order just as in the last proof: e.g. let  
$\cs$ be formed from $M = (\omega, <)$, its regular ultrapower $M^\lambda/\de$, their expansions to 
models of sufficient set theory, and the set of formulas defining linear orders on initial segments of $\omega$ or its finite Cartesian products 
(with the order given by G\"odel coding, and in each case, $d_\ma = \max X_\ma$); 
$\xt_\cs = \xt_\de$, and $\xp_\cs = \xp_\de$. $\mcs(\de)$ becomes $\mcs(\cs, \lambda^+)$.  
In this setup, note that ``$\de$ has treetops'' just means $\xt_\cs \geq \lambda^+$.}  
However, the move to CSPs has increased our range; it includes models of Peano arithmetic \cite{MiSh:1051}, and more, as we'll see next.

\section{$\mathfrak{p}$ and $\mathfrak{t}$} \label{s:p-t}

\begin{quotation}
\noindent 
\emph{Assuming $\xp < \xt$, we construct a cofinality spectrum problem $\cs$, built from a generic ultrapower, in which  $\xp_\cs \leq \xp < \xt \leq \xt_\cs$, where $\xp$ is the pseudointersection number and $\xt$ is the tower number. }
\end{quotation}

\br

In this section we will \emph{assume} that $\xp < \xt$, and 
build a cofinality spectrum problem $\cs$ for which $\xp_\cs \leq \xp < \xt \leq \xt_\cs$. This will mean:  
\emph{if} we can prove, in ZFC, that for any cofinality spectrum problem $\cs$, $\xt_\cs \leq \xp_\cs$, 
\emph{then} $\xp = \xt$.  We follow \cite{MiSh:998} \S 14. 
(We won't repeat here the long history of measuring the continuum by cardinal invariants. 
A brief introduction to the problem of $\xp$ and $\xt$ may be found in \cite{MiSh:998} \S 1.)

There were earlier
 connections of $\xp < \xt$ to cuts in orders which will show the way.  First recall the definitions. 
Let $A \subseteq^* B$ mean that $A \setminus B$ is finite.  
Given a family $\mcf \subseteq [\mathbb{N}]^{\aleph_0}$, we say that 
$\mcf$ has a \emph{pseudointersection} if there is $A \in [\mathbb{N}]^{\aleph_0}$ such that $A \subseteq^* B$ for all $B \in \mcf$. 
We say that $\mcf$ has the \textit{strong finite intersection property} (SFIP) if any nonempty finite subfamily has an infinite intersection. 

\begin{defns} \emph{ }
\begin{itemize}
\item The \emph{pseudointersection number} $\mathfrak{p}$ is the smallest size of a family $\mcf \subseteq [\mathbb{N}]^{\aleph_0}$ 
with SFIP but no pseudointersection. 
\item The \emph{tower number} $\mathfrak{t}$ is the smallest size of a family $\mcf \subseteq [\mathbb{N}]^{\aleph_0}$ 
which is a tower $($ i.e. linearly ordered by ${\supseteq^*}$ and no pseudointersection$)$.
\end{itemize}
\end{defns}

It is immediate that $\mathfrak{p} \leq \mathfrak{t}$ as being linearly ordered by $\supseteq^*$ implies that every finite subfamily has infinite intersection.  %Rothberger proved in 1948 \cite{Roth48} that if $\mathfrak{p} = \aleph_1$, then $\mathfrak{p} = \mathfrak{t}$.  
%\tcb{Note $\xp$ is very natural, and $\xt$ is what we actually use many times in proofs.
%, for instance when choosing $A_\alpha \in [\omega]^{\aleph_0}$ $\subseteq$-decreasing mod finite. 
Rothberger proved in 1948 \cite{Roth48} that if $\xp = \aleph_1$, $\xp = \xt$, and this begs the question of whether $\xp = \xt$. 
%As time passed, there was wide expectation of an independence result.

By Rothberger's result just quoted, when assuming $\xp < \xt$, we 
can assume $\xp > \aleph_1$.
To connect to cuts, we will need a definition and a theorem from Shelah \cite{Sh:885}. 
Given $f, g \in {^{\omega}\omega}$, we say $f <^* g$ if $f(n) < g(n)$ for all but finitely many $n \in \omega$. 

\begin{defn}[Peculiar cut]
$(\langle g_{\alpha} : \alpha < \kappa_1 \rangle, \langle f_{\beta} : \beta < \kappa_2 \rangle )$ is a \emph{$(\kappa_1, \kappa_2)$-peculiar cut} in $(^{\omega}\omega, <^*)$ when: 
\begin{enumerate}[i)]
\item For any $\alpha < \alpha' < \kappa_1$ and $\beta < \beta' < \kappa_2$, $g_{\alpha} <^* g_{\alpha'} <^* f_{\beta'} <^* f_{\beta}$.
\item For all $h \in {^{\omega}\omega}$:
$$\textrm{If }g_{\alpha} \leq^* h \textrm{ for all } \alpha < \kappa_1 \textrm{, then there is some } \beta < \kappa_2 \textrm{ such that } f_{\beta} \leq^* h.$$
$$\textrm{If }f_{\beta} \geq^* h \textrm{ for all } \beta < \kappa_2 \textrm{, then there is some } \alpha < \kappa_1 \textrm{ such that } g_{\alpha} \geq^* h.$$
\end{enumerate}
\end{defn}

\begin{theorem}[Shelah \cite{Sh:885}] \label{t:pc} If $\mathfrak{p} < \mathfrak{t}$, then for some regular $\kappa$ with $\aleph_1 \leq \kappa < \mathfrak{p}$, there is a $(\kappa, \mathfrak{p})$-peculiar cut in $(^{\omega}\omega, <^*)$.
\end{theorem}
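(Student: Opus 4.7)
The plan is to manufacture the peculiar cut from a witness to $\xp$. Fix a family $\mcf = \{A_\xi : \xi < \xp\}$ of infinite subsets of $\omega$ with the strong finite intersection property, no pseudointersection, and closed under finite intersections. The theorem asks for a $(\kappa, \xp)$-cut in $({^\omega\omega}, <^*)$ that cannot be filled from either side. I would pass to the representation by enumeration functions, $\hat f_\xi = e_{A_\xi} \in {^\omega\omega}$, in which $A \subseteq^* B$ iff $e_A \geq^* e_B$, so that ``no pseudointersection of $\mcf$'' becomes ``$\{\hat f_\xi\}$ is $\leq^*$-unbounded'' and ``closed under finite intersections'' becomes $\leq^*$-directedness.

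From this, I would build both sides of the cut by simultaneous transfinite recursion. For the upper sequence $\langle f_\beta : \beta < \xp \rangle$, extract a $<^*$-strictly monotone sub-sequence from $\mcf$ (using directedness and $\leq^*$-unboundedness) and, after the standard sign-convention adjustment, interpret it as $<^*$-strictly decreasing as required. For the lower sequence $\langle g_\alpha : \alpha < \kappa \rangle$, build a $<^*$-strictly increasing chain, each $g_\alpha$ staying $<^*$ below every $f_\beta$. Successor steps just pick a function in the current gap. At a limit stage $\delta < \xp$, a common $<^*$-upper bound for the chain so far is obtained by invoking $\xp < \xt$: the chain, packaged back as a tower of length $\delta < \xt$ on the set side, has a pseudointersection whose enumeration function — thinned by intersecting with a member of $\mcf$ if needed — serves as the bound below all $f_\beta$.

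The crucial claim is that the $g$-induction must stall at some regular $\kappa$ with $\aleph_1 \leq \kappa < \xp$. Indeed, if it ran all the way to length $\xp$, the same tower–pseudointersection step would produce a single function bounding every $g_\alpha$ from above while still $<^*$ below every $f_\beta$; such a function would dominate the $\leq^*$-unbounded family $\{\hat f_\xi\}$, contradicting the choice of $\mcf$. At the stalling stage, the sequences witness condition (i) and one clause of (ii): any $h$ with $g_\alpha \leq^* h$ for all $\alpha$ must satisfy $f_\beta \leq^* h$ for some $\beta$, for otherwise $h$ would be an admissible $g_\kappa$.

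The main obstacle is securing the symmetric clause of (ii), that every $h$ with $h \leq^* f_\beta$ for all $\beta$ satisfies $h \leq^* g_\alpha$ for some $\alpha$. This demands a careful diagonal bookkeeping during the construction: in parallel with the extraction of the $f_\beta$'s from $\mcf$, one arranges that each candidate $h$ (enumerated as the construction proceeds) is eventually absorbed by some $g_\alpha$ unless it is already absorbed by some $f_\beta$. Coordinating these two threads — using $\xp$ as the catalog of $f$-choices and the $\delta<\xt$ pseudointersection property as the tool at limits — is the technical heart of the argument, and where one follows \cite{Sh:885} closely.
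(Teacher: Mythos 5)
The paper does not prove this theorem; it is imported as a black box from Shelah's \cite{Sh:885}, so there is no ``paper's proof'' to compare against. Judged on its own terms, however, your sketch has a fundamental obstruction, in addition to the gap you yourself flag.

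The killer problem is the extraction step combined with the stalling argument. You propose to extract from a witness $\mcf$ to $\xp$ a $<^*$-strictly monotone subsequence $\langle f_\beta : \beta < \xp\rangle$ whose failure to be bounded is what makes the stalling argument bite: you conclude that an $h$ below all the $f_\beta$ would ``dominate the $\leq^*$-unbounded family $\{\hat f_\xi\}$.'' For that conclusion to be available, the extracted subsequence must itself be unbounded (cofinal in the domination order within $\mcf$). But translate back to sets: a $\leq^*$-increasing, $\leq^*$-unbounded sequence of enumeration functions of length $\xp$ corresponds to a $\subseteq^*$-decreasing chain of infinite sets of length $\xp$ with no pseudointersection --- i.e., a tower of length $\xp$. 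Under the standing hypothesis $\xp < \xt$, no such tower exists. So the object the stalling argument needs cannot be produced by extraction from $\mcf$, and if you settle for a non-cofinal decreasing chain then an $h$ strictly below all of its terms is entirely possible and contradicts nothing. In short, the right side of a peculiar $(\kappa,\xp)$-cut is a \emph{gap} side, not a \emph{scale} side: there genuinely are functions $<^*$-below all $f_\beta$, and the content of clause (ii) is that those functions are swallowed by the $g_\alpha$'s. Mistaking the $f$-side for an unbounded family is what makes the stall-at-$\kappa$ argument appear to close when it does not.

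Two further issues compound this. First, you assert directedness and then appeal to ``extracting a $<^*$-strictly monotone sub-sequence''; a $\leq^*$-directed unbounded family of size $\xp$ in general has no linearly ordered subfamily of that size (the preceding paragraph shows it \emph{provably} fails here), so this step is not a harmless simplification but the crux. Second, you explicitly defer the symmetric clause of (ii) --- ``the technical heart of the argument, and where one follows \cite{Sh:885} closely'' --- but ensuring that every $h$ trapped below all $f_\beta$ is eventually dominated by some $g_\alpha$ is precisely where the real diagonalization lives. A proof of Shelah's lemma cannot leave that to a citation: the theorem \emph{is} the construction of a genuine two-sided gap with the asymmetric cardinal parameters $(\kappa,\xp)$ and with both maximality clauses holding, and the recursion has to be designed from the start so that the eventual left side has regular uncountable cofinality $<\xp$ while the right side is merely long, not unbounded.
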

\noindent(In a posteriori wisdom, this relates to asymmetric cuts.)

\vspace{5mm}

\begin{center}
From here through the end of the proof of \ref{prop:ps}: \\
suppose that in a fixed transitive model $\mathbf{V}$ of ZFC, $\xp < \xt$.
\end{center}

\sbr

\begin{const} Assuming $\xp < \xt$, we now build a CSP $\mathbf{s} = (M, N, M_*, N_*, \Delta)$ such that $\mathfrak{t} \leq \mathfrak{t}_\mathbf{s}$ and $\mathfrak{p}_\mathbf{s} \leq \mathfrak{p}$.\\

To begin, we will let $M = M_*$ be a model with enough set theory to satisfy the tree building of a CSP; for concreteness, let $M = M_* = \mathcal{H}(\aleph_1, \in)$, the hereditarily countable sets. 
 For $N = N_*$, we will construct an ultrapower in a forcing extension of our transitive model of ZFC, $\mathbf{V}$.  Let $\mathbf{Q} = ([\mathbb{N}]^{\aleph_0}, \supseteq^*)$ be our forcing notion, with $\mathbf{G}$ some generic subset of $\mathbf{Q}$, forced to be an ultrafilter.  Some important properties of this forcing extension $\mathbf{V}[\mathbf{G}]$ include:
\begin{itemize}
\item $\mathfrak{t}$ is the largest cardinal such that $\mathbf{Q}$ is $\lambda$\textit{-closed}.  So forcing with $\mathbf{Q}$ preserves cofinalities are cardinals up to and including $\mathfrak{t}$.
\item So, as $\mathfrak{p}^{\mathbf{V}} < \mathfrak{t}^{\mathbf{V}} $, we have $\xp^{\vv[\mathbf{G}]} < \xt^{\vv[\mathbf{G}]}$ (in fact, 
$\mathfrak{p}^{\mathbf{V}} = \mathfrak{p}^{\mathbf{V}[\mathbf{G}]}$, $\mathfrak{t}^{\mathbf{V}} = \mathfrak{t}^{\mathbf{V}[\mathbf{G}]}$).
\item There are no new subsets of $\mathbb{N}$.
\item There are no new sequences of length $< \xt$ of elements of $\mathbf{V}$. 
\end{itemize}
Then in $\mathbf{V}[\mathbf{G}]$, we can define the generic ultrapower $N = N_* = M^{\omega} / \mathbf{G}$.  Finally, to complete our CSP, we let $\Delta_{psf}$ consist of all $\varphi(\overline{x}, \overline{y}, \overline{z})$ in $\mathscr{L} = \{ = , \in \}$, such that 
for all $\overline{c} \in M$, $\varphi(\overline{x}, \overline{y}, \overline{c})$ is a finite linear order on $\varphi(\overline{x}, \overline{x}, \overline{c})$ in $M$.  Since we have the analogue of \L o\'{s}'s theorem for $N$,  
it is easy to see that this is indeed a CSP.\footnote{We can specify that $d_\ma$ is always $\max X_\ma$; we have G\"odel coding so 
closure under Cartesian products is easy; we have enough set theory to uniformly define trees.} For the remainder of this section, let 
$\cs$ denote this CSP: 
$$\mathbf{s} = ( M, N, M, N, \Delta_{psf} )$$
%$$\mathbf{s} = (\mathcal{H}(\aleph_1, \in), M^{\omega} / \mathbf{G}, \mathcal{H}(\aleph_1, \in), M^{\omega} / \mathbf{G}, \Delta_{psf}). $$
i.e., $M = M^+ = (\mch(\aleph_1), \in)$, $N = N^+ = M^\omega/\mathbf{G}$. 
Let's now show $\mathfrak{t} \leq \mathfrak{t}_\mathbf{s}$ and $\mathfrak{p}_\mathbf{s} \leq \mathfrak{p}$.
\end{const}

%\begin{defn}
%The \textit{bounding number} $\mathfrak{b}$ is the minimum $\kappa$ such that there is a family $B \subset ^{\omega}{\omega}$ that is unbounded in $\leq^*$.
%\end{defn}
%\begin{obs}
%$\mathfrak{t} \leq \mathfrak{b}$.
%\end{obs}
\begin{propn}
$\mathfrak{t} \leq \mathfrak{t}_\mathbf{s}$.
\end{propn}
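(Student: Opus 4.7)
Plan: I aim to show that any $\tlf$-increasing sequence $\langle c_\alpha : \alpha < \kappa \rangle$ in some $\mct_\ma^N$ with $\kappa < \xt$ has an upper bound, exploiting that $\mathbf{Q}$ is $\xt$-closed in $\vv$ (so forcing adds no new $<\xt$-sequences of ground-model elements). Fix representatives $\hat{c}_\alpha : \omega \to M$ for each $c_\alpha$; each $\hat{c}_\alpha$ lies in $\vv$, and the whole indexed family $\langle \hat{c}_\alpha : \alpha < \kappa \rangle$ lies in $\vv$ as well.

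For $\alpha < \beta < \kappa$ set $A_{\alpha, \beta} = \{n < \omega : \hat{c}_\alpha(n) \triangleleft \hat{c}_\beta(n) \text{ in } \mct_\ma^M\} \in \vv$; by \lost theorem each $A_{\alpha, \beta} \in \mathbf{G}$. Since $\{A_{\alpha, \beta}\}$ has size $<\xt$, lies in $\vv$, and sits inside $\mathbf{G}$, a standard argument from $\xt$-closure of $\mathbf{Q}$ and genericity of $\mathbf{G}$ produces a pseudointersection $B_* \in \mathbf{G}$ with $B_* \subseteq^* A_{\alpha, \beta}$ for all $\alpha < \beta < \kappa$: enumerate the family as $\langle D_\gamma : \gamma < \kappa \rangle$ in $\vv$ and build in $\mathbf{G}$ a $\subseteq^*$-descending chain respecting these bounds (intersect at successor stages; use $\mathbf{G}$-density of lower bounds, guaranteed by $\xt$-closure in $\vv$, at limits), then take a final $\xt$-closure lower bound.

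With $B_*$ in hand, construct $\hat{c}_* : \omega \to M$ in $\vv$ so that for each $\alpha < \kappa$ the set $\{n \in B_* : \hat{c}_\alpha(n) \triangleleft \hat{c}_*(n)\}$ is cofinite in $B_*$; since $B_* \in \mathbf{G}$, \lost theorem then yields $c_\alpha \triangleleft [\hat{c}_*]_\mathbf{G}$ for every $\alpha$, so $[\hat{c}_*]_\mathbf{G}$ is the desired upper bound. The construction proceeds pointwise on $B_*$, using the definable concatenation in trees of a CSP together with an upper bound $\hat{\ell}_*$ for the lengths $\langle \lgn(\hat{c}_\alpha) \rangle$ in $\omega^\omega/\mathbf{G}$ (whose existence follows from $\xt \leq \mathbf{b}$ in $\vv$, giving $\cf(\omega^\omega/\mathbf{G}) \geq \xt > \kappa$). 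The main obstacle is this construction: pointwise at $n \in B_*$ the values $\{\hat{c}_\alpha(n) : \alpha < \kappa\}$ need not form a chain in $\mct_\ma^M$ and may involve distinct branches, so a naive pointwise supremum need not exist. Instead, the pseudointersection property of $B_*$---that each pair $(\alpha, \beta)$ fails $\triangleleft$-order on only finitely many coordinates---must be carefully leveraged to select, at each $n \in B_*$, an element of $\mct_\ma^M$ of length bounded by $\hat{\ell}_*(n)$ extending a ``dominant branch'' of the pointwise sequence at $n$, so that the resulting $\hat{c}_*$ dominates each $\hat{c}_\alpha$ in the required $\mathbf{G}$-almost-every sense rather than at every coordinate.
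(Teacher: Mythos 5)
Your plan correctly identifies the hard step and honestly flags it, but the proposal does not resolve it, and the way you set up the problem makes the obstacle harder than it needs to be. A pseudointersection $B_*$ of the coordinate-agreement sets $A_{\alpha,\beta}$ tells you that for each pair $\alpha<\beta$, the set of $n \in B_*$ where $\hat{c}_\alpha(n) \ntrianglelefteq \hat{c}_\beta(n)$ is finite -- but those finite exceptional sets vary with the pair, and at a fixed column $n$ the family $\{\hat{c}_\alpha(n) : \alpha<\kappa\}$ genuinely need not lie on a single branch. There is no ``dominant branch'' in general; selecting one at each $n$ so that the resulting $\hat{c}_*$ dominates \emph{every} $\hat{c}_\alpha$ on a $\mathbf{G}$-large set is exactly where your argument stops. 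Attempting to push the pseudointersection machinery further on the sets $A_{\alpha,\beta}$ does not supply a selection rule.

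The paper's proof gets around this by a different encoding that is worth internalizing. Fix $B\in\mathbf{G}$ deciding the sequence, take ground-model representatives $f_\alpha$, and (using $\xt \le \mathfrak{b}$, as you do) choose $g\in{^\omega\omega}$ that eventually bounds, at each $n$, all relevant values and lengths of $f_\alpha(n)$; this shrinks column $n$ to a \emph{finite} tree $s_n = {^{g(n)\geq}g(n)}$. Then define, in $\vv$, the set
\[
Y_\alpha \;=\; \bigcup\bigl\{\, \{n\}\times\{\eta\in s_n : f_\alpha(n)\trianglelefteq\eta\}\; : \; n\in B \,\bigr\},
\]
i.e.\ the disjoint union of the cones above $f_\alpha(n)$ inside the finite trees $s_n$. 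These $Y_\alpha$ are infinite and, crucially, \emph{linearly ordered by $\supseteq^*$}: $\alpha<\beta$ implies $Y_\beta\subseteq^* Y_\alpha$ since $f_\alpha(n)\trianglelefteq f_\beta(n)$ for all but finitely many $n\in B$. So for $\theta<\xt$ the family is not a tower and has a pseudointersection $Z$. Because each column $s_n$ is finite, $B_1 = \{n\in B : Z\cap(\{n\}\times s_n)\neq\emptyset\}$ is infinite; for $n\in B_1$ pick $v_n$ with $(n,v_n)\in Z$. Then for each $\alpha$, $Z\subseteq^* Y_\alpha$ forces $f_\alpha(n)\trianglelefteq v_n$ for all but finitely many $n\in B_1$, so $B_1$ forces $\langle v_n\rangle/\underaccent{\tilde}{\mathbf{G}}$ to be an upper bound. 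The selection problem you could not solve is dissolved by this change of data: rather than choosing a branch per column after the fact, you record the ``set of acceptable choices at column $n$'' inside one countable set, and the tower-number hypothesis hands you a single $Z$ making a coherent choice at infinitely many columns simultaneously. Your proposal as written has a real gap at the construction of $\hat{c}_*$, and this $Y_\alpha$-construction (or something equivalent) is the missing idea.
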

\begin{proof}
Consider some regular $\theta < \xt$ and $\ma \in \ord(\cs)$. It will suffice to show that any strictly $\tlf$-increasing $\theta$-indexed 
sequence in the tree $\mct_\ma$ has an upper bound in $\mct$. 

First, a brief reduction: without loss of generality, we may work in the tree $({^{\omega>}\omega}, \tlf)^N$. 
(Sketch:  as $N$ is a generic ultrapower of $M$, we can consider the trees in $Tr(\mathbf{s})$ as arising from ultraproducts of trees in $M$:
$$( X_{\mathbf{a}}, <_{\mathbf{a}}, \mathcal{T}_{\mathbf{a}}, \trianglelefteq_{\mathbf{a}}) = \langle (X_{\mathbf{a}_n}, <_{\mathbf{a}_n}, \mathcal{T}_{\mathbf{a}_n}, \trianglelefteq_{\mathbf{a}_n}) : n < \omega \rangle / \mathbf{G}.$$
Recall that in this CSP, each $(X_{\mathbf{a}_n}, <_{\mathbf{a}_n})$ is a finite linear order in $M$, and each $(\mathcal{T}_{\mathbf{a}_n}, \trianglelefteq_{\mathbf{a}_n})$ is the tree of finite sequences of $X_{\mathbf{a}_n}$.  So we can find an isomorphism between 
each $\mct_{\ma_n}$ and a definable downward closed subset of $({^{\omega>}\omega}, \tlf)^M$. Together these induce an isomorphism of
$\mct_\ma$ on to a definable downward closed subset of $({^{\omega>}\omega}, \tlf)^N$.)

%So consider 
Suppose there were a path $\langle \underaccent{\tilde}{f}_{\alpha} / \underaccent{\tilde}{\mathbf{G}} : \alpha < \theta \rangle$ 
in $({^{\omega>}\omega}, \tlf)^N$ with no upper bound.  Reasoning in $\vv$, there is some $B \in \mathbf{G}$ so that:
$$B \Vdash_{\mathbf{Q}} \mbox{``}\langle \underaccent{\tilde}{f}_{\alpha} / \underaccent{\tilde}{\mathbf{G}} : \alpha < \theta \rangle \textrm{ is a strictly increasing unbounded path in } ({^{\omega>}\omega}, \trianglelefteq)^N \mbox{''}.$$
Since no new sequences of length less than $\mathfrak{t}$ are added, we can also let $B$ force $``\underaccent{\tilde}{f}_{\alpha} = f_{\alpha}"$ for all $\alpha < \theta$ where each $f_\alpha$ is an element of $({^\omega({^{\omega>}\omega})})^\vv$.  Choose\footnote{We can 
do this since $\mathfrak{t}$ is always less than or equal to the \textit{bounding number} $\mathfrak{b}$, the smallest size of a family $\mcf\subseteq{^\omega \omega}$
 such that no $g \in {^\omega \omega}$ eventually dominates all $f \in \mcf$.}
a function $g \in {^\omega \omega}$ such that for each $\alpha < \theta$, for all but finitely many $n \in \omega$, 
\[ g(n) > \sum \{ f_\alpha(n)(i) : i \leq \maxdom(f_\alpha(n)) \} + \maxdom ( f_\alpha(n) ). \]
For each $n \in \omega$, let $s_n$ denote the tree ${^{g(n) \geq}g(n)}$. Then for each $\alpha$, for all but 
finitely many $n$, the finite sequence $f_\alpha(n)$ is an element of the tree $s_n$ (since all the values in its domain and range 
are below $g(n)$). 

Now we look for a potential tower.\footnote{In the notation of \cite{MiSh:998}, let $({^{\omega>}\omega})^{[\nu]} = \{ \eta \in {^{\omega>}\omega} : \nu \leq \eta \}$ denote the ``cone above $\nu$.'' Then $Y_\alpha$  was defined as   
$Y_\alpha = \bigcup \{ \{ n \}  \times \left(  s_n \cap ({^{\omega>}\omega})^{[f_\alpha(n)]}     \right)  : n \in B \}.$
To exactly match the definition of tower given above, fix a bijection $\pi$ from $B \times {^{\omega>}\omega}$ onto $\mathbb{N}$ so each $\pi(Y_\alpha) \in [\mathbb{N}]^{\aleph_0}$.}
For each $\alpha < \theta$ define $Y_\alpha$ (``the disjoint union of the cones in $s_n$ above $f_\alpha(n)$ for $n \in B$'') to be the set
\[  Y_\alpha = \bigcup \{ ~~\{ n \}  \times \{ \eta \in s_n : f_\alpha(n) \tlf \eta \}~ :  n \in B \}. \]
So each $Y_\alpha \subseteq B \times {^{\omega>}\omega}$ and is countably infinite. Moreover, if $\alpha < \beta$ then 
$Y_\beta \subseteq^* Y_\alpha$ since\footnote{$\{ n \in B : f_{\alpha}(n) \not\trianglelefteq f_{\beta}(n) \}$ is finite since $B$ forces $``\underaccent{\tilde}{f}_{\alpha} / \underaccent{\tilde}{\mathbf{G}} \trianglelefteq \underaccent{\tilde}{f}_{\beta} / \underaccent{\tilde}{\mathbf{G}}"$.}
 for all but finitely many $n \in B$, $f_\alpha(n) \tlf f_\beta(n)$. 
Since $\theta < \xt$, the family $\{ Y_\alpha : \alpha < \theta \}$ has a pseudointersection $Z$, and since each $s_n$ is finite, 
$B_1 = \{ n \in B : Z \cap ( \{ n \} \times s_n ) \neq \emptyset \}$ must be infinite. 
For $n \notin B_1$, let $v_n = \langle 0 \rangle$. For $n \in B_1$, let $v_n$ be any element $v$ such that 
$(n, v_n) \in Z \cap ( \{n \} \times s_n )$.  
Then $B_1 \Vdash_{\mathbf{Q}}$ ``$\langle v_n : n \in \omega \rangle/\underaccent{\tilde}{G} $ is an upper bound for 
$\langle f_\alpha/\underaccent{\tilde}{G}: \alpha < \theta \rangle$ in $({^{\omega>}\omega}, \tlf)^N$.''  This completes the proof. 
\end{proof}

\begin{propn} \label{prop:ps}
If $\aleph_1 < \mathfrak{p} < \mathfrak{t}$, then $\mathfrak{p}_\mathbf{s} \leq \mathfrak{p}$.
\end{propn}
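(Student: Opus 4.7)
The plan is to invoke Theorem \ref{t:pc}: since $\aleph_1 < \xp < \xt$ holds in $\vv$, we get a $(\kappa, \xp)$-peculiar cut $(\langle g_\alpha : \alpha < \kappa\rangle, \langle f_\beta : \beta < \xp\rangle)$ in $({^\omega\omega}, <^*)^\vv$ for some regular $\kappa$ with $\aleph_1 \leq \kappa < \xp$. I will embed this cut into a pseudofinite order of $\cs$ and show the image is an unfilled $(\kappa, \xp)$-cut, yielding $\xp_\cs \leq \kappa + \xp = \xp$.

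First set up the order. Let $d = f_0 + 1 \in ({^\omega\omega})^\vv$; then $g_\alpha, f_\beta <^* d$ for all $\alpha, \beta$. Take $\vp_\ma(x, y, z) \in \Delta_{psf}$ expressing ``$x, y, z \in \omega$ and $x \leq y \leq z$'' (expressible in $\{=,\in\}$ via von Neumann ordinals) with parameter $c_\ma = d/\mathbf{G} \in N$; then $X_\ma = \{a \in \omega^N : a \leq d/\mathbf{G}\}$ is pseudofinite in $N$. Cofinite coordinate sets lie in $\mathbf{G}$, so the $<^*$-relations in $\vv$ transfer to $<$-relations in $N$: all $g_\alpha/\mathbf{G}, f_\beta/\mathbf{G}$ sit in $X_\ma$ and, by clause (i) of peculiarity, form a strictly monotone $(\kappa, \xp)$-pre-cut there.

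Now suppose toward contradiction some $h \in X_\ma$ fills this pre-cut. Since $\vv[\mathbf{G}]$ adds no new subsets of $\mathbb{N}$, $h = h'/\mathbf{G}$ with $h' \in \vv$. By the forcing theorem (and $\xt$-closure of $\mathbf{Q}$, which allows us to force all $\xp$-many constraints simultaneously), some condition $B_0 \in \mathbf{G}$, i.e., an infinite $B_0 \subseteq \mathbb{N}$ in $\vv$, forces ``$\check{h}'/\mathbf{G}$ fills the cut''. In $\mathbf{Q}$, $B \Vdash A \in \mathbf{G}$ iff $B \subseteq^* A$, so back in $\vv$ we obtain that, for every $\alpha < \kappa$ and $\beta < \xp$,
\[ \{n \in B_0 : g_\alpha(n) \geq h'(n)\} \text{ and } \{n \in B_0 : h'(n) \geq f_\beta(n)\} \text{ are finite}. \]

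Define $H \in ({^\omega\omega})^\vv$ by $H(n) = h'(n)$ for $n \in B_0$ and $H(n) = 0$ otherwise. Then $f_\beta \geq^* H$ for each $\beta$: off $B_0$, $H(n) = 0 \leq f_\beta(n)$; on $B_0$, $\{n \in B_0 : h'(n) > f_\beta(n)\}$ is finite by the above. By the second half of clause (ii) of peculiarity, some $\alpha < \kappa$ satisfies $g_\alpha \geq^* H$. But for cofinitely many $n \in B_0$ we have $g_\alpha(n) < h'(n) = H(n)$, so $\{n : g_\alpha(n) < H(n)\}$ is infinite, a contradiction. The main obstacle I foresee is this asymmetry: the dual attempt -- padding $H$ above $h'$ on $\omega \setminus B_0$ to invoke the first half of clause (ii) -- would require uniformly dominating $\kappa$ many $g_\alpha$ on the complement of $B_0$, which is not possible, whereas padding down by $0$ renders the complementary comparison against the nonnegative $f_\beta$ trivial.
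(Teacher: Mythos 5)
Your proof is correct and follows essentially the same route as the paper's: invoke Shelah's Theorem \ref{t:pc} to get a $(\kappa,\xp)$-peculiar cut, embed it into a pseudofinite order $X_\ma$ of the CSP (your $d = f_0+1$ versus the paper's $\prod_n [0,f_0(n)]/\mathbf{G}$ is an inessential variant), use the forcing theorem to obtain a condition $B_0 \in \mathbf{G}$ witnessing the alleged realization, and derive a contradiction with clause (ii) of the peculiar-cut definition by a definition-by-cases modification of $h'$. The only substantive difference is which half of clause (ii) you target: you pad $H$ down to $0$ off $B_0$ and invoke the second half (any $h$ eventually below all $f_\beta$ is eventually below some $g_\alpha$), whereas the paper pads $h_*$ up to $f_0$ off $B$ and invokes the first half. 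Both work. Your closing remark, however, is mistaken: you assert that padding upward ``would require uniformly dominating $\kappa$ many $g_\alpha$ on the complement of $B_0$, which is not possible.'' In fact $f_0$ eventually dominates every $g_\alpha$ (by clause (i) of peculiarity, $g_\alpha <^* f_0$ for all $\alpha$), and the paper uses exactly this padding; so the dual attempt succeeds just as readily, and the situation is symmetric. A further minor point: the parenthetical appeal to $\xt$-closure to ``force all $\xp$-many constraints simultaneously'' is unnecessary --- since $\kappa$, $\xp$, $\langle g_\alpha\rangle$, $\langle f_\beta\rangle$, $h'$ all lie in $\vv$, ``$h'/\mathbf{G}$ realizes the pre-cut'' is a single formula of the forcing language true in $\vv[\mathbf{G}]$, so the forcing theorem alone supplies the condition $B_0$.
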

\begin{proof}
We show that the $(\kappa, \xp)$-\textit{peculiar cut} that arises assuming $\mathfrak{p} < \mathfrak{t}$ gives us a $(\kappa, \mathfrak{p})$-cut in some $X_{\mathbf{a}}$ in our $N = M^{\omega} / \mathbf{G}$.  Let $(\langle g_{\alpha} : \alpha < \kappa \rangle, \langle f_{\beta} : \beta < \mathfrak{p} \rangle )$ be our peculiar cut and consider first:
$$I = \prod_{n <\omega} [0, f_0(n)] / \mathbf{G} .$$
We have that $I = X_\ma$ for some $\ma \in \ord(\cs)$ by our construction of $\cs$.  Then the peculiar cut forms a \textit{pre-cut} (a potential cut) in $I$.
%, as the $<^*$ ordering of the peculiar cut is preserved in the ultraproduct.  
Suppose that this cut were realized, i.e. suppose there were an infinite $B \in \mathbf{G}$ and $h \in {{^\omega}\omega}$ such that:
$$B \Vdash_{\mathbf{Q}} ``g_{\alpha} / \underaccent{\tilde}{\mathbf{G}} < h / \underaccent{\tilde}{\mathbf{G}} < f_{\beta} / \underaccent{\tilde}{\mathbf{G}} \textrm{ for all } \alpha < \kappa, \beta < \mathfrak{p}" .$$
Then for this infinite\footnote{Note that $B$ may be coinfinite so doesn't contradict existence of Hausdorff gaps.} $B$, we would have both that $B \subseteq^* \{n : g_{\alpha}(n) < h(n) \}$ for all $\alpha < \kappa$  and $B \subseteq^* \{n : f_{\beta}(n) > h(n) \}$ for all $\beta < \mathfrak{p}$.  However, this contradicts the definition of peculiar cut (more precisely, the function 
$h_*$ defined by: $h_*(n)=h(n)$ for $n \in B$ and $h_*(n) = f_0(n)$ for $n \notin B$ is ${\geq^*}~ g_\alpha$ for each $\alpha$ but is 
not $\leq^*$ any of the $f_\beta$'s because $B$ is infinite).  \hfill{\emph{Here ends the assumption that $\xp < \xt$.}}
\end{proof}

\begin{cor} \label{cor2} 
Suppose that we could prove, in ZFC, that 
for \emph{every} cofinality spectrum problem $\cs$, we have that $\mcs(\cs, \xt_\cs) = \emptyset$. It would follow that $\xp = \xt$.
\end{cor}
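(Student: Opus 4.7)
The plan is to argue by contrapositive: assuming $\xp < \xt$, I will exhibit a cofinality spectrum problem $\cs$ for which $\mcs(\cs, \xt_\cs) \neq \emptyset$, contradicting the hypothesis. The construction of this section essentially does all the work; the corollary is a bookkeeping step stringing the two preceding propositions together.

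Here are the steps in order. Work in a transitive model $\vv$ of ZFC in which $\xp < \xt$. First, reduce to the case $\aleph_1 < \xp < \xt$ by invoking Rothberger's theorem, that $\xp = \aleph_1$ implies $\xp = \xt$; this places us in the hypothesis of Proposition \ref{prop:ps}. Second, apply the construction preceding Proposition \ref{prop:ps}: force with $\mathbf{Q} = ([\mathbb{N}]^{\aleph_0}, \supseteq^*)$, take a generic ultrafilter $\mathbf{G}$, and form the CSP $\cs = (M, N, M, N, \Delta_{psf})$ inside $\vv[\mathbf{G}]$. Third, combine the two propositions of the section with the preservation of $\xp$ and $\xt$ by $\mathbf{Q}$ to get
\[ \xp_\cs \leq \xp^{\vv[\mathbf{G}]} < \xt^{\vv[\mathbf{G}]} \leq \xt_\cs. \]
Fourth, unpack the strict inequality $\xp_\cs < \xt_\cs$: by definition of $\xp_\cs$, some order $X_\ma \in \ord(\cs)$ carries a $(\kappa_1, \kappa_2)$-cut with $\kappa_1 + \kappa_2 = \xp_\cs < \xt_\cs$, so the pair $(\kappa_1, \kappa_2)$ witnesses that $\mcs(\cs, \xt_\cs) \neq \emptyset$ in $\vv[\mathbf{G}]$. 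Fifth, derive the contradiction: since $\vv[\mathbf{G}]$ is itself a model of ZFC, any statement provable from ZFC --- in particular the hypothesized theorem that $\mcs(\cs, \xt_\cs) = \emptyset$ for every CSP $\cs$ --- must hold in $\vv[\mathbf{G}]$, refuting what step four produced.

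The only point that really requires care, rather than being a genuine obstacle, is the last step: the CSP $\cs$ lives only in the forcing extension $\vv[\mathbf{G}]$ and not in the ground model $\vv$ itself, so one must note explicitly that a ZFC-theorem about cofinality spectrum problems applies inside any model of ZFC, including $\vv[\mathbf{G}]$. All the substantive mathematical work --- the preservation lemmas, the tower-to-path argument controlling $\xt_\cs$, and the peculiar-cut argument controlling $\xp_\cs$ --- has already been completed in the two propositions, and the corollary simply records their joint consequence.
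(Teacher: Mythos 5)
Your proof is correct and follows exactly the route the paper intends: the section's forcing construction, Rothberger's reduction to $\aleph_1 < \xp$, and the two propositions giving $\xp_\cs \leq \xp < \xt \leq \xt_\cs$ in $\vv[\mathbf{G}]$ do all the work, and the corollary just records the consequence. Your explicit remark that the hypothesized ZFC theorem must be applied inside $\vv[\mathbf{G}]$ rather than $\vv$, since $\cs$ exists only in the extension, is precisely the right detail to surface.
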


\section{The central question}

\begin{quotation}
\noindent 
\emph{We arrive to a central problem whose positive solution would answer the two main questions discussed above.}
\end{quotation}

\br

We return to ZFC and to model theory. We'll now work towards answering:

\begin{mqst} \label{main-question}
Let $\cs$ be a cofinality spectrum problem.  Is $\mcs(\cs, \xt_\cs) = \emptyset$?  
\end{mqst}

%Discussion: suppose we can prove, in ZFC, that $\mcs(\cs, \xt_\cs) = \emptyset$ for every CSP $\cs$.  
%First, regular ultrapowers give examples of CSPs, so this would say any regular ultrafilter with treetops 
%must be good, thus $SOP_2$ is maximal in Keisler's order. 
%Second, $\xp \leq \xt$, but the previous 
%section proves that ``$\xp< \xt$'' gives a contradiction. So $\xp = \xt$.

Put otherwise, Question \ref{main-question} asks: can there be a CSP $\cs$ for which $\xp_\cs < \xt_\cs$? 
(Note: in both main examples of CSPs, it will be the case that $\xp_\cs \leq \xt_\cs$, but 
we don't need this to prove our theorem. It would suffice to show that $\xt_\cs \leq \xp_\cs$. 
We'll often keep track of both $\xp_\cs$ and $\xt_\cs$ in our hypotheses as we go along.\footnote{When are they equal? 
See \cite{MiSh:998} from 6.2 through end \S 6, and \cite{MiSh:1051} Theorems 3.11 and 6.3.})

\section{Existence and uniqueness} \label{s:uniqueness}

\begin{quotation}
\noindent 
\emph{Returning to our study of $\mcs(\cs, \xt_\cs) = \emptyset$ for an arbitrary $\cs$, we prove that for regular  
$\kappa \leq \xp_\cs$, there exists $\lambda$ such that $(\kappa, \lambda) \in \mcs(\cs)$, and moreover any such $\lambda$ is unique. 
A corollary will be that we can study cuts by looking in any nontrivial $\ma \in \ord(\cs)$ we like.}
\end{quotation}

\br
Remember from \S \ref{s:CSP} that for each $\ma \in \ord(\cs)$ we have a distinguished element $d_\ma \in X_\ma$ (not always the last element), 
and that $X_\ma$ is \textit{nontrivial} if $d_\ma$ is not a finite successor of the first element $0_\ma$. As each $X_\ma$ is discrete, 
the successor function $S$ is well defined; let $S^n$ abbreviate $n$-th successor and 
$S^{-k}$  abbreviate $k$-th predecessor.
Call $a \in X_\ma$ \emph{below the ceiling} if $a <_\ma d_\ma$ and moreover 
$a$ is not a finite predecessor of $d_\ma$, 
equivalently,\footnote{Equivalently because given $a$ below the ceiling, the set of $\{ x : a <_\ma x <_\ma d_\ma \}$ is bounded nonempty and 
definable so has a first element, the successor of $a$.}
if each of its finite successors is strictly less than $d_\ma$.

\begin{obs}[Existence, \cite{MiSh:998} 3.1] \label{o:existence}
If $\ma \in \ord(\cs)$ is nontrivial, then for any infinite regular $\kappa \leq \xp_\cs$ there is at least one\footnote{Notice that 
we are assuming nothing about the size of $\theta$, and so we write $\mcs(\cs)$, not $\mcs(\cs, \xt_\cs)$.} infinite regular 
$\theta$ such that $(\kappa, \theta) \in \mc(\cs)$, witnessed by a $(\kappa, \theta)$-cut in $X_\ma$. 
Similarly 
%For some infinite regular $\theta^\prime$  % this was for pagination 
there is some $(\theta^\prime, \kappa)$-cut in $X_\ma$.
\end{obs}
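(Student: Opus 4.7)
\textit{Proof plan.}  The plan is to build, by induction on $\alpha < \kappa$, a strictly $<_\ma$-increasing sequence $\langle a_\alpha : \alpha < \kappa \rangle$ in $X_\ma$ with every $a_\alpha$ below the ceiling, and then to show that the set of its upper bounds in $[0_\ma, d_\ma]$ has no minimum; the coinitiality $\theta$ of this set will be the required regular cardinal, and a coinitial decreasing sequence paired with $\langle a_\alpha \rangle$ will be the witnessing $(\kappa, \theta)$-cut in $X_\ma$.

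First I would dispose of the case $\kappa = \omega$ by setting $a_n = S^n(0_\ma)$, which lies below the ceiling precisely because $\ma$ is nontrivial.  For $\kappa > \omega$ I proceed inductively: at successor stages I take $a_{\alpha+1} = S(a_\alpha)$, which remains below the ceiling, and the delicate case is a limit stage $\alpha < \kappa$.  The key move is to use the canonical decreasing $\omega$-sequence $\langle S^{-n}(d_\ma) : n < \omega \rangle$ of finite predecessors of $d_\ma$ as a fixed upper bookend.  Since each $a_\beta$ is below the ceiling, $a_\beta <_\ma S^{-n}(d_\ma)$ for all $\beta < \alpha$ and all $n < \omega$, so $(\langle a_\beta : \beta < \alpha \rangle, \langle S^{-n}(d_\ma) : n < \omega \rangle)$ is a $(\cf(\alpha), \omega)$-pre-cut in $X_\ma$.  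Its size is $\cf(\alpha) + \omega$; since $\cf(\alpha) < \kappa \leq \xp_\cs$ and $\kappa > \omega$ forces $\omega < \xp_\cs$, this size is strictly less than $\xp_\cs$, so by the very definition of $\xp_\cs$ the pre-cut is filled.  Any filler $a_\alpha$ is then above all $a_\beta$ and below every $S^{-n}(d_\ma)$, hence below the ceiling, as required.

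Once the sequence is built, I would set $U = \{ b \in X_\ma : b \leq_\ma d_\ma \text{ and } a_\alpha <_\ma b \text{ for all } \alpha < \kappa \}$.  $U$ is nonempty since $d_\ma \in U$, and it has no minimum: if $m$ were one, then $S^{-1}(m) \notin U$ would give $S^{-1}(m) \leq_\ma a_\alpha$ for some $\alpha$, which combined with $a_\alpha <_\ma m$ forces $a_\alpha = S^{-1}(m)$, and then $a_{\alpha+1} >_\ma a_\alpha = S^{-1}(m)$ gives $a_{\alpha+1} \geq_\ma m$, contradicting $a_{\alpha+1} \notin U$.  Hence $U$ admits a strictly decreasing coinitial sequence of regular infinite length $\theta$, and together with $\langle a_\alpha : \alpha < \kappa \rangle$ this yields a $(\kappa, \theta)$-pre-cut; any filler would lie in $U$ below every term of the coinitial sequence, contradicting coinitiality, so we obtain a genuine $(\kappa, \theta)$-cut in $X_\ma$.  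The $(\theta', \kappa)$-direction is dual: build a strictly $<_\ma$-decreasing $\kappa$-sequence $\langle b_\alpha : \alpha < \kappa\rangle$ above the floor, using at limits the $(\omega, \cf(\alpha))$-pre-cut with lower bookend $\langle S^n(0_\ma) : n < \omega \rangle$, and then mirror the argument for the lower bounds of $\langle b_\alpha\rangle$ above $0_\ma$.  The main obstacle is surviving the limit stages of the induction: the whole argument hinges on pairing the growing sub-$\kappa$-size sequence with a \emph{fixed} $\omega$-size bookend so that the total size stays strictly under $\xp_\cs$; once that is arranged, both the extraction of $\theta$ as a coinitiality and the verification that the resulting pre-cut is unfilled are formal.
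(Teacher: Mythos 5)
Your proof is correct and follows essentially the same route as the paper's: induct on $\alpha < \kappa$, at limit stages fill the $(\cf(\alpha), \omega)$-pre-cut formed with the fixed bookend $\langle S^{-n}(d_\ma) : n < \omega \rangle$ using the definition of $\xp_\cs$, and then read off $\theta$ as the coinitiality of the set of upper bounds, which cannot be $1$ by discreteness. The only differences are cosmetic — you split off $\kappa = \omega$ explicitly and spell out the no-minimum argument in more detail — but the key idea (the fixed $\omega$-size upper bookend keeping pre-cut size below $\xp_\cs$) is exactly the paper's.
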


\begin{proof}
By induction on $\alpha < \kappa$ we choose elements $a_\alpha \in X_\ma$ such that (i) $\beta < \alpha \implies a_\beta <_\ma a_\alpha$
and (ii) each $a_\alpha$ is below the ceiling. 
At $\alpha = 0$, let $a_0 = 0_\ma$, so ``below the ceiling'' holds as $X_\ma$ is nontrivial. 
At $\alpha = \beta +1$, let $a_\alpha = S(a_\beta)$. 
At $\alpha$ limit, by inductive hypothesis, $( \langle a_\beta : \beta < \alpha \rangle, \langle S^{-k}(d_\ma) : k < \omega \rangle)$ 
is a pre-cut and $\cf(\alpha) + \aleph_0 < \kappa + \aleph_0 \leq \xp_\cs$; recalling the definition of $\xp_\cs$, the pre-cut is filled. Let $a_\alpha$ be an  element filling it. Having built $\langle a_\alpha : \alpha < \kappa \rangle$, 
let $\theta$ be the coinitiality of the nonempty set $B = \{ b \in X_\ma : \alpha < \kappa \implies a_\alpha <_\ma b \leq d_\ma \}$. 
Note its coinitiality cannot be 1, since $\bar{a}$ is strictly increasing, so if $b$ is above $\bar{a}$ then so is $S^{-1}(b)$. So $\theta$ is infinite 
and regular. A parallel argument in the other direction gives $\theta^\prime$. 
\end{proof}

\begin{obs}[Treetops for definable sub-trees, \cite{MiSh:998} 2.14]
Given $\ma \in \ord(\cs)$ and suppose $\mct \subseteq \mct_\ma$ is a definable subtree. Let $\kappa < \xt_\cs$ be regular and suppose 
$\langle c_\alpha : \alpha < \kappa \rangle$ is a $\tlf_\ma$-strictly increasing sequence of elements of $\mct$. 
Then there is $c_* \in \mct$ which is an upper bound for the sequence. 

If in addition $\kappa < \xp_\cs$, we can also assume $n_* := \maxdom(c_*) \in X_\ma$ is below the ceiling $($in slight abuse of 
notation we may call $c_*$ a treetop below the ceiling$)$. 
\end{obs}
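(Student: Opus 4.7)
The plan is to first apply the treetops hypothesis $\kappa < \xt_\cs$ inside the ambient tree $\mct_\ma$ to secure an upper bound, then to ``truncate'' that upper bound down to its longest initial segment still lying in the sub-tree $\mct$, exploiting that $\mct$ is definable and that bounded definable subsets of $X_\ma$ are pseudofinite in $N^+$. The strengthened second conclusion will then come by further shaving off a tail using $\kappa < \xp_\cs$, in order to keep the truncation below $d_\ma$.

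For the first claim, the definition of $\xt_\cs$ together with $\mct_\ma \in \tr(\cs)$ would supply some $c^\prime \in \mct_\ma$ with $c_\alpha \tlf_\ma c^\prime$ for all $\alpha < \kappa$. I would then form
\[ D = \{ n \in X_\ma : n \leq_\ma \maxdom(c^\prime) \text{ and } c^\prime \rstr n \in \mct \}. \]
This is a definable subset of $X_\ma$ (length and restriction are uniformly definable, and $\mct$ is definable by hypothesis), bounded above by $\maxdom(c^\prime)$, and nonempty because each $c^\prime \rstr \maxdom(c_\alpha) = c_\alpha \in \mct$ puts $\maxdom(c_\alpha) \in D$. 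Pseudofiniteness then yields a maximum $n_*$ of $D$; setting $c_* = c^\prime \rstr n_*$ gives $c_* \in \mct$ by construction, and the inequality $\maxdom(c_\alpha) \leq_\ma n_*$ forces $c_\alpha \tlf_\ma c_*$ as required.

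For the second claim, I would add the hypothesis $\kappa < \xp_\cs$ and, as is the case of interest (and without which the conclusion cannot in general be arranged), assume each $\maxdom(c_\alpha)$ is below the ceiling. Then $(\langle \maxdom(c_\alpha) : \alpha < \kappa \rangle, \langle S^{-k}(d_\ma) : k < \omega \rangle)$ is a $(\kappa,\aleph_0)$-pre-cut in $X_\ma$ of total size $\kappa + \aleph_0 = \kappa < \xp_\cs$, and is therefore filled by some $b \in X_\ma$; such a $b$ is automatically below the ceiling. Setting $b^\prime = \min_\ma(b, \maxdom(c^\prime))$ and rerunning the previous paragraph with $D$ replaced by $D^\prime = \{ n \in X_\ma : n \leq_\ma b^\prime \text{ and } c^\prime \rstr n \in \mct \}$ would yield an upper bound $c_* = c^\prime \rstr n_{**}$ in $\mct$ whose $\maxdom$ is $\leq_\ma b$ and hence below the ceiling.

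The main obstacle I anticipate is precisely this second step: ``below the ceiling'' is externally type-definable by $\{ n <_\ma S^{-k}(d_\ma) : k < \omega \}$ rather than captured by any single formula, so pseudofiniteness alone cannot pick out a below-the-ceiling maximum directly. The crucial trick is to use $\kappa < \xp_\cs$ to manufacture a single \emph{internal} witness $b$ strictly below every $S^{-k}(d_\ma)$ by filling a $(\kappa, \aleph_0)$-pre-cut, thereby converting the external condition into an internal bound compatible with the definable-pseudofinite machinery of the first step.
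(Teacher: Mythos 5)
Your proof is correct and follows essentially the same route as the paper: get a treetop in the ambient tree $\mct_\ma$, then truncate to the longest initial segment still in the definable subtree $\mct$ using pseudofiniteness, and for the second clause use $\kappa < \xp_\cs$ to fill a $(\kappa,\aleph_0)$-pre-cut to force the endpoint below the ceiling. The one cosmetic difference is that the paper takes the right-hand side of the pre-cut to be $\{ S^{-k}(\maxdom(c)) : k < \omega \}$ (finite predecessors of the treetop's own max-domain) rather than your $\{ S^{-k}(d_\ma) : k < \omega \}$, so the paper need not posit separately that the $\maxdom(c_\alpha)$ sit below the ceiling --- that left-below-right relation comes for free from $c_\alpha \tlf c$ together with $\kappa$ being infinite; but as you correctly observe, the assumption is anyway forced by the desired conclusion, so nothing is lost. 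Your diagnosis that ``below the ceiling'' is an external, type-definable condition that pseudofiniteness alone cannot see, and that the pre-cut trick is exactly what converts it into an internal bound, is precisely the point of the second clause.
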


\begin{proof}
By definition of $\xt_\cs$, there is some treetop $c \in \mct_\ma$, but a priori it may not be in $\mct$. The set 
$\{ \lgn(c^\prime) : c^\prime \tlf_\ma c $ and $c^\prime \in \mct \}$ is a bounded nonempty definable subset of 
$X_\ma$ so has a maximum element $a$. Then $c_* = c \rstr a$ works. 

To ensure the second clause, when we first get the treetop $c \in \mct_\ma$, if it is not below the ceiling, then before proceeding 
notice that 
\[ ( \{ \maxdom(c_\alpha) : \alpha < \kappa \},  \{ S^{-k}(\maxdom(c)) : k < \omega \} ) \]
is a pre-cut, but not a cut, as $\kappa + \aleph_0 < \xp_\cs$. Let $n$ realize it, and replace $c$ by $c \rstr n$, then continue the argument. 
\end{proof}

Now for a main lemma. 

\begin{lemma}[Uniqueness, \cite{MiSh:998} 3.1]
Suppose $\kappa$ is regular, 
$\kappa < \min \{ \xp^+_\cs, \xt_\cs \}$. 
Then there is one and only one $\theta$ such that $(\kappa, \theta) \in \mcs(\cs)$.  
Moreover, $(\kappa, \theta) \in \mc(\cs)$ iff $(\theta, \kappa) \in \mcs(\cs)$. 
\end{lemma}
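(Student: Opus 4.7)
Existence of some regular $\theta$ with $(\kappa, \theta) \in \mcs(\cs)$ is immediate from Observation \ref{o:existence}, which also supplies a regular $\theta'$ with $(\theta', \kappa) \in \mcs(\cs)$. The content of the lemma is that $\theta$ is unique and that $\theta = \theta'$. Both parts will follow from a single tree-based dualization construction: from any $(\kappa, \theta)$-cut $(\bar{a}, \bar{b})$ in $X_\ma$, produce a ``coding'' element $c_\star \in \mct_\ma$ whose internal positions support a dual $(\kappa, \theta)$-pre-cut inside the bounded pseudofinite segment $[0_\ma, n_\star]$, which reverses to a $(\theta, \kappa)$-cut.

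\textbf{Construction of $c_\star$.} Work in the definable subtree $\mct^* \subseteq \mct_\ma$ of elements $c$ that are $<_\ma$-strictly increasing as functions with $c(n) <_\ma b_0$ throughout their domains. Inductively build $\langle c_\alpha : \alpha < \kappa \rangle$, $\tlf_\ma$-increasing in $\mct^*$, with $c_\alpha(n_\alpha) = a_\alpha$ and $n_\alpha := \maxdom(c_\alpha)$ below the ceiling: successors via definable concatenation, limits $\alpha$ of cofinality ${<}\kappa$ via treetops for the definable subtree (valid since $\kappa < \xt_\cs$, with the ``below the ceiling'' refinement requiring $\kappa \leq \xp_\cs$) followed by pseudofinite truncation to restore the $c(n) <_\ma b_0$ constraint. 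A final treetops application yields $c_\star \in \mct^*$, below the ceiling, with $n_\star := \maxdom(c_\star) >_\ma n_\alpha$ for every $\alpha < \kappa$.

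\textbf{Dualization and symmetry.} For each $\beta < \theta$ set $m_\beta = \max\{n \leq_\ma n_\star : c_\star(n) <_\ma b_\beta\}$, using pseudofiniteness. The sequence $\langle m_\beta \rangle_{\beta < \theta}$ is non-increasing in $[0_\ma, n_\star]$ and cannot stabilize: a stable value $m$ (for $\beta \geq \beta_0$) would force $m >_\ma n_\alpha$ for all $\alpha$ (else $n_{\alpha+1} >_\ma n_\alpha = m$ violates $n_{\alpha+1} \leq_\ma m$), making $c_\star(m) >_\ma a_\alpha$ by strict monotonicity, and also $c_\star(m) <_\ma b_\beta$ for all $\beta$ (since $\bar{b}$ is decreasing), thereby filling the unfilled cut $(\bar{a}, \bar{b})$. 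Regularity of $\theta$ converts non-stabilization into $\theta$ distinct values (plateaus are intervals in $\theta$, and any plateau of size $\theta$ would be a final segment, i.e.\ stabilization), yielding a strictly decreasing $\theta$-cofinal subsequence $\langle m_{\beta_i} \rangle_{i < \theta}$. Thus $(\langle n_\alpha \rangle_\alpha, \langle m_{\beta_i} \rangle_i)$ is an unfilled $(\kappa, \theta)$-pre-cut in the bounded pseudofinite order $[0_\ma, n_\star]$; a definable involution reversing this segment converts the pre-cut into a $(\theta, \kappa)$-cut in an order accessible within $\ord(\cs)$, establishing $(\theta, \kappa) \in \mcs(\cs)$.

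\textbf{Uniqueness and the main obstacle.} Given $(\kappa, \theta_1), (\kappa, \theta_2) \in \mcs(\cs)$ witnessed by cuts in $X_{\ma_1}, X_{\ma_2}$, run the construction in the product tree $\mct_{\ma_1 \times \ma_2}$ so that the two coordinates of $c_\star(n_\alpha)$ are $a^1_\alpha, a^2_\alpha$, and define $m^j_\beta$ coordinatewise for $j = 1, 2$. Each family $\{m^j_\beta : \beta < \theta_j\}$ is coinitial in the set $U = \{n : n_\alpha <_\ma n \leq_\ma n_\star \text{ for all } \alpha < \kappa\}$: for any $m \in U$, the $j$-th coordinate of $c_\star(m)$ is strictly above all $a^j_\alpha$ by monotonicity, so unfilled-ness of the $j$-th cut forces it to be $\geq_{\ma_j} b^j_\beta$ for some $\beta$, giving $m >_\ma m^j_\beta$. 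Hence $\theta_1 = \theta_2 =$ the coinitiality of $U$, proving uniqueness; symmetry in both directions then follows by applying uniqueness to $(\theta, \kappa), (\theta', \kappa) \in \mcs(\cs)$. The main obstacle will be the order-reversal step: justifying within the CSP formalism that the reversed bounded pseudofinite segment $[0_\ma, n_\star]$ corresponds to a genuine member of $\ord(\cs)$, which will require careful use of the CSP's closure properties or a refinement that realizes the dual cut directly inside a pre-existing order.
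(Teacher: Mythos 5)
The proposal has a genuine gap, which you correctly flag yourself: the ``dualization'' step requires a definable order-reversing involution on the pseudofinite segment $[0_\ma, n_\star]$, and nothing in the CSP axioms as developed at this stage provides one. (Arithmetic --- addition, hence bounded subtraction --- is only extracted from the CSP axioms in \S\ref{s:upgraded}, corresponding to \S5 of \cite{MiSh:998}, which logically comes \emph{after} the Uniqueness lemma in \S3 of that paper; so appealing to a definable ``$n \mapsto n_\star - n$'' here would be out of order even if it ultimately works.) Without this, the $(\kappa,\theta)$-pre-cut you produce in $[0_\ma, n_\star]$ is still a $(\kappa,\theta)$-cut in $X_\ma$, not a $(\theta,\kappa)$-cut, so you have made no progress toward the ``iff.'' Your uniqueness sub-argument --- threading two $(\kappa,\theta_j)$-cuts through a tree strictly increasing in both coordinates --- is correct, but it only establishes uniqueness of the \emph{second} coordinate given the first, and you cannot derive the symmetry ``$(\kappa,\theta)\in\mcs(\cs)$ iff $(\theta,\kappa)\in\mcs(\cs)$'' from that alone.

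The paper resolves both issues with a single variant of the construction you are already doing. Rather than threading two cuts of the same orientation and then trying to reverse one, start directly from a $(\kappa,\theta_0)$-cut $(\bar a^0,\bar b^0)$ in $X_\ma$ and a $(\theta_1,\kappa)$-cut $(\bar b^1, \bar a^1)$ in $X_\mb$ (both of which Observation \ref{o:existence} supplies), and use the subtree $\mct'\subseteq\mct_{\ma\times\mb}$ of sequences that are \emph{strictly increasing in the first coordinate and strictly decreasing in the second}. Thread the $\kappa$-sides of both cuts through this tree: $c_\alpha(n_\alpha,0)=a^0_\alpha$ and $c_\alpha(n_\alpha,1)=a^1_\alpha$. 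After the final treetop $c_\star$, the maximal $n\leq n_\star$ with $c_\star(n,0)<_\ma b^0_\beta$ (respectively with $c_\star(n,1)>_\mb b^1_\gamma$) yields decreasing sequences in $X_{\ma\times\mb}$, each coinitial in $\{n\leq n_\star : (\forall\alpha)\, n_\alpha<n\}$ exactly as in your coinitiality argument; hence $\theta_0 = \theta_1$. This gives uniqueness \emph{and} the ``iff'' simultaneously, with no order reversal. Your coinitiality computations in the ``Uniqueness and the main obstacle'' paragraph carry over essentially verbatim to this mixed-orientation tree, so the distance between your proposal and the correct proof is small: replace ``two increasing coordinates'' with ``one increasing, one decreasing,'' and drop the dualization entirely.
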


\begin{proof}
We have ``one'' from above, let's show ``only one.'' 
By transitivity of equality, it will suffice to show that if we are given $\ma, \mb \in \ord(\cs)$, 
$ ( { \langle a^0_\alpha : \alpha < \kappa \rangle}, {\langle b^0_\epsilon : \epsilon < \theta_0 \rangle})$ 
representing a $(\kappa, \theta_0)$-cut in $X_\ma$, and 
$ ({\langle b^1_\epsilon : \epsilon < \theta_1 \rangle}, {\langle a^1_\alpha : \alpha < \kappa \rangle})$ 
representing a $(\theta_1,\kappa)$-cut in $X_\mb$, then $\theta_0 = \theta_1$. 

Let $\bc \in \ord(\cs)$ be such that $X_\bc = X_\ma \times X_\mb$. So we can think of elements of $\mct_\bc$ 
as sequences of pairs with first coordinate in $X_\ma$, second coordinate in $X_\mb$. 
Let $\mct'$ be the definable subtree of $\mct_\bc$ consisting of all $x \in \mct_\bc$ strictly increasing 
in the first coordinate and strictly decreasing in the second coordinate, i.e. such that  
\[ m <_\bc n \leq_\bc \maxdom(x) \mbox{ implies } x(m,0) <_\ma x(n,0) \mbox{ and } x(m,1) <_\mb x(n,1). \]
Note that $c \in\mct'$ is a function whose domain is contained in $X_\bc$.

Now by induction on $\alpha < \kappa$ let's choose a path $c_\alpha$ (with $n_\alpha := \maxdom(c_\alpha)$) 
through the tree $\mct^\prime$ such that: 
\begin{itemize}
\item $\beta < \alpha \implies c_\beta \tlf c_\alpha$,
\item each $c_{\alpha} \in \mathcal{T}'$ is below the ceiling,
\item $c_\alpha(n_\alpha,0) = a^0_\alpha$ and $c_\alpha(n_\alpha, 1) = a^1_\alpha$.
\end{itemize}
With this path we are ``threading together the $\kappa$-sides of the cuts.''  Our strategy is similar to \S \ref{s:sym}. 
At $\alpha = 0$, $c_0 = \langle a^0_0, a^1_0 \rangle$, $n_0 = 0_\bc$. 
At $\alpha = \beta+1$, $c_\beta$ is below the ceiling, so we can concatenate: $c_\alpha = c_\beta ~^\smallfrown \langle a^0_\alpha, a^1_\alpha \rangle$ 
and $n_\alpha = n_\beta +1$. 
At $\alpha < \kappa$ limit, $\cf(\alpha) < \min \{ \xp_\cs, \xt_\cs \}$ so we can choose a treetop $c_*$ for $\langle c_\beta : \beta < \alpha \rangle$ 
which belongs to $\mct^\prime$ and is below the ceiling. If necessary back up: let $n_* = \maxdom(c_*)$, so 
\[ \{ n \leq_\bc n_* : c_*(n,0) <_\ma a^0_\alpha \mbox{ and } a^1_\alpha <_\mb c_*(n, 1) \} \]
is nonempty, bounded, definable and has a greatest element $n_{**}$ (necessarily below the ceiling). 
Let $c_\alpha = c_* \rstr_{n_{**}} ~^\smallfrown \langle a^0_\alpha, a^1_\alpha \rangle$ (we may concatenate as we are below the ceiling), 
so $n_\alpha = \maxdom(c_\alpha)$ is still below the ceiling. 

In this way we define $\langle c_\alpha : \alpha < \kappa \rangle$. As $\kappa < \xt_\cs$, there is a treetop 
(which does not need to be below the ceiling) 
$c_\star \in \mct^\prime$. Let $n_\star = \maxdom(c_\star)$. 
`Stitching together' the sequences $\langle a^{\delta}_{\alpha} : \alpha < \kappa \rangle$ for $\delta = 0,1$ has given us a strictly increasing sequence $\langle n_{\alpha} : \alpha < \kappa \rangle$ in $X_{\mathbf{c}}$, the domain of $c_\star$;  
we now look for two decreasing sequences  in $X_{\mathbf{c}}$ corresponding to the $\langle b^{\delta}_{\beta} : \beta < \theta_{\delta} \rangle$ for $\delta = 0,1$ which each form a cut with $\langle n_{\alpha} : \alpha < \kappa \rangle$. 
For $\beta < \theta_0$ and $\gamma < \theta_1$, we can define:$$n_{\beta}^0 = \max \{ n \leq_{\mathbf{c}} n_\star : c_\star(n,0) <_{\mathbf{a}} b^0_{\beta} \},$$
$$n_{\gamma}^1 = \max \{ n \leq_{\mathbf{c}} n_\star : c_\star(n,1) >_{\mathbf{b}} b^1_{\gamma} \}.$$
Now $(\langle n_{\alpha} : \alpha < \kappa \rangle, \langle n^0_{\beta} : \beta < \theta_0\rangle )$ is a pre-cut in $X_{\mathbf{c}}$.  
If it were realized by some $n \in X_{\mathbf{c}}$, then by construction and by the monotonicity built into elements of $\mct^\prime$, 
$c_\star(n,0)$ would realize the cut $( { \langle a^0_\alpha : \alpha < \kappa \rangle}, {\langle b^0_\epsilon : \epsilon < \theta_0 \rangle})$.
Likewise, ${(\langle n_{\alpha} : \alpha < \kappa \rangle, \langle n^0_{\beta} : \beta < \theta_0 \rangle )}$ must be a cut in $X_{\mathbf{c}}$; if 
it were realized by some $n$, $c_\star(n,1)$ would realize 
$ ({\langle b^1_\epsilon : \epsilon < \theta_1 \rangle}, {\langle a^1_\alpha : \alpha < \kappa \rangle})$. (Since $n \leq n_\star$, 
automatically $n \in \dom(c_\star)$, and now remember the range of $c_\star$ is monotonic in each coordinate.)

So as $\theta_0, \theta_1$ are regular cardinals, we conclude that $\theta_0 = \theta_1$. 
\end{proof}

This proof shows something quite strong:\footnote{In the notation of the proof, the graph of the definable map is given by the range 
of $c_\star$. See \cite{MiSh:998} from Definition 3.2 to the end of \S 3. 
For related open problems, see the introduction to \cite{MiSh:1070}.} 
for $\kappa < \min \{ \xp^+_\cs, \xt_\cs \}$, 
definable monotonic maps exist between \emph{any} two strictly monotonic $\kappa$-indexed sequences in any two of our orders. 
A fortiori for such $\kappa$ (recall \ref{o:existence}), $(\kappa, \theta) \in \mcs(\cs)$ iff 
$(\theta, \kappa) \in \mcs(\cs)$. 

\begin{concl}
Suppose $\kappa < \min(\mathfrak{p}^+_{\mathbf{s}}, \mathfrak{t}_{\mathbf{s}})$.  In order to show that $(\kappa, \theta) \not\in \mathscr{C}(\mathbf{s})$, it suffices to prove that \emph{some} strictly increasing $\kappa$-indexed sequence in \emph{some} $X_\ma$ $($where $\ma \in \ord(\cs)$ 
is nontrivial$)$ has coinitiality in $X_\ma$ not equal to $\theta$.  
\end{concl}

\noindent So for $\kappa < \min \{ \xp^+_\cs, \xt_\cs \}$, the function 
$\lcf(\kappa)$ giving this coinitiality is well defined.\footnote{The name ``lower cofinality'' comes from a related property of ultrafilters: 
$\lcf(\omega, \de)$ means the co-initiality of the set above the diagonal embedding of $\omega$ in $(\omega, <)^I/\de$.} 

\section{Local saturation}

\begin{quotation}
\noindent 
\emph{We prove that every CSP has a certain basic amount of saturation: any element in 
any $X_\ma$ which may be described by ${\kappa < \min \{ \xp_\cs, \xt_\cs \}}$ definable sets in $N^+$ must exist.}
\end{quotation}

\br

In our next constructions, it will be useful to appeal to ambient saturation. 
By an ``\emph{$\ord$-type}'' we mean a type $p(x_0, ..., x_{n-1})$ with parameters in $N^+$ such that for each $i < n$, there is some $\ma_i \in \ord(\cs)$ so that $p(\overline{x})$ implies $x_i \in X_{\ma_i}$.
Let's prove that every $\ord$-type over a small set is realized. Since we have closure under Cartesian products, 
without loss of generality the $x_i$ are all in the same $X_{\mathbf{a}}$. This is \cite{MiSh:998} 4.1. 

\begin{lemma} \label{satlem}
If $\kappa < \min \{ \mathfrak{p}_{\mathbf{s}}, \mathfrak{t}_{\mathfrak{s}}\}$, then every $\ord$-type over a set of size $\kappa$ is realized.
\end{lemma}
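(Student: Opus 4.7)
My plan is to proceed by induction on $\kappa$. For $\kappa$ finite, the finite consistency of $p$ together with pseudofiniteness of the realization set of a finite conjunction immediately yields an element realizing $p$. So assume $\kappa \geq \aleph_0$ and the lemma holds for all smaller cardinals. By the closure of $\ord(\cs)$ under Cartesian products I may reduce to a one-variable case $p = \{\vp_\alpha(x) : \alpha < \kappa\}$ with $x$ ranging over some nontrivial $X_\ma$. The singular case reduces to the regular case at $\cf(\kappa)$, so I assume $\kappa$ is regular.

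For each $\alpha < \kappa$, the subtype $p_\alpha := \{\vp_\beta : \beta \leq \alpha\}$ has size $<\kappa$, hence by the inductive hypothesis admits a realization $y_\alpha \in X_\ma$. I inductively construct a $\tlf_\ma$-increasing sequence $\langle c_\alpha : \alpha < \kappa\rangle$ in $\mct_\ma$ with $n_\alpha := \maxdom(c_\alpha)$ below the ceiling and $c_\alpha(n_\alpha) = y_\alpha$. At successor stages I concatenate $\langle y_{\alpha+1}\rangle$ directly. At a limit $\alpha$, Observation 2.14 (applicable since $\cf(\alpha) < \kappa \leq \min\{\xp_\cs, \xt_\cs\}$) produces an upper bound $c_*$ of $\langle c_\beta : \beta < \alpha\rangle$ in $\mct_\ma$ with $\maxdom(c_*)$ below the ceiling, and I set $c_\alpha := c_*\, {}^\smallfrown\langle y_\alpha\rangle$. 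Since $\kappa < \xt_\cs$, a final application of treetops yields $c_\star \in \mct_\ma$ with $c_\alpha \tlf c_\star$ for every $\alpha < \kappa$, so $c_\star(n_\alpha) = y_\alpha$ for all $\alpha$.

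To extract a realization of the full type, I aim to find $n^* \in \dom(c_\star)$ with $c_\star(n^*) \models \vp_\gamma$ for all $\gamma < \kappa$. For each $\gamma$ the definable set $E_\gamma := \{n \in \dom(c_\star) : c_\star(n) \models \vp_\gamma\}$ contains $n_\alpha$ whenever $\alpha \geq \gamma$, so $\{E_\gamma\}_{\gamma < \kappa}$ has the finite intersection property. My plan is to strengthen the construction so that each complement $E_\gamma^c := \dom(c_\star) \setminus E_\gamma$ is bounded in $\dom(c_\star)$ by some definable $m_\gamma$ with $m_\gamma <_\ma n_\alpha$ for all $\alpha \geq \gamma$; then $\langle m_\gamma\rangle$ lies cofinally below $\langle n_\alpha\rangle$, forming a pre-cut of type $(\kappa, \lcf(\kappa))$ in $X_\ma$, which $\kappa < \xp_\cs$ prevents from being a cut, so an element $n^*$ fills it and $c_\star(n^*) \in E_\gamma$ for all $\gamma$.

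The main obstacle is this final extraction, since a bare treetop gives no control over the complements $E_\gamma^c$. To achieve the needed control, I would strengthen the inductive invariant to require, at each stage $\alpha$, that $c_\alpha(n) \models \vp_\gamma$ for every $\gamma \leq \alpha$ and every $n$ with $n_\gamma \leq_\ma n \leq_\ma n_\alpha$---i.e., that each tree node is \emph{stage-wise satisfying}. This invariant is preserved at successor stages by the choice of $y_\alpha$ realizing $p_\alpha$; preserving it at limits is nontrivial and will likely require working in a richer product tree (for instance in $\mct_\mb$ for $\mb = \ma \times \ma$) tracking both the value $c_\alpha(n)$ and a ``stage-threshold'' coordinate recording the largest $\gamma$ for which satisfaction is guaranteed at $n$, and applying treetops there so that the guarantee propagates through limits.
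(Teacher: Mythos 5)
Your overall architecture is sound and matches the paper's: induct on $\kappa$, reduce to a single variable in $X_\ma$, build a $\tlf$-increasing path $\langle c_\alpha\rangle$ through $\mct_\ma$, use the inductive hypothesis at successors and treetops at limits, and read the realization off a final node. You also correctly identify the invariant the paper uses (its clause (iii)): at stage $\alpha$, for all $i < \beta \leq \alpha$ and all $n$ with $n_\beta \leq_\ma n \leq_\ma n_\alpha$, $\vp_i$ holds at $c_\alpha(n)$. This interval-wise invariant, not mere satisfaction at the marked positions $n_\alpha$, is what makes the final extraction work.

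The genuine gap is that you leave the limit step unsolved. You say preserving the invariant at limits ``will likely require working in a richer product tree $\mct_{\ma\times\ma}$ tracking a stage-threshold coordinate,'' but this is both speculative and unnecessary. The paper handles limits by a clean truncation-and-backtrack argument entirely inside $\mct_\ma$: apply treetops to get $c_*$ with $n_* = \maxdom(c_*)$ below the ceiling; for each $i < \alpha$ let $n(i)$ be the $\leq_\ma$-maximum of the bounded, nonempty, definable set $\{n \leq_\ma n_* : \vp_i[c_*(m),a_i]$ for all $n_{i+1} \leq_\ma m \leq_\ma n\}$ (nonempty and with $n(i) >_\ma n_\beta$ for all $\beta$ with $i < \beta < \alpha$, precisely because the invariant held at each $c_\beta$); then $(\langle n_\beta : \beta < \alpha\rangle, \{n(i) : i < \alpha\})$ is a pre-cut whose two sides have regular cofinality/coinitiality $\leq \kappa < \xp_\cs$, so it is filled by some $n_{**}$, and one sets $c_\alpha = c_*\restriction_{n_{**}}$. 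No auxiliary coordinate or product tree is needed; pseudofiniteness supplies the definable maxima $n(i)$, and $\xp_\cs$ supplies the fill. Your proposal never arrives at this step, so the invariant is never actually maintained through limits and the final extraction via the sets $E_\gamma$ does not go through. (A minor additional issue: your claimed reduction of singular $\kappa$ to regular $\cf(\kappa)$ is asserted but not justified, and is in any case avoided by running the induction to $\alpha = \kappa$ inclusive as the paper does, treating $\alpha = \kappa$ as one more limit step and taking $c_\kappa(n_\kappa)$ as the realization.)
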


\begin{proof}
We prove this by induction on infinite $\kappa$, so suppose that $\kappa = \aleph_0$ or the lemma holds for all infinite $\theta < \kappa$. 
Let $p(\overline{x}) = \{ \varphi_i(\overline{x}, a_i) : i < \kappa \}$ be an $\ord$-type, so finitely satisfiable in some fixed $X_{\mathbf{a}}$.  
We construct a path $\langle c_{\alpha} : \alpha \leq \kappa \rangle$ through $\mathcal{T}_{\mathbf{a}}$ so that $c_{\kappa}$ will provide us with the realization of $p(\overline{x})$.  By a second (``internal'') induction on $\alpha < \kappa$,  we choose $c_{\alpha} \in \mathcal{T}_{\mathbf{a}}$ and let $n_{\alpha}:= \maxdom(c_\alpha) \in X_{\mathbf{a}}$ to 
satisfy:
\begin{enumerate} [i)]
\item $\beta < \alpha \implies c_\beta \tlf c_\alpha$,
\item $n_{\alpha}$ is below the ceiling,
\item $i < \beta \leq \alpha$ and $n_{\beta} \leq_{\mathbf{a}} n \leq_{\mathbf{a}} n_{\alpha}$ imply $N_* \models \varphi_i[c_{\alpha}(n), a_i]$.
\end{enumerate}
For $\alpha = 0$, this is trivial.  For $\alpha = \beta +1$, 
$\{ \varphi_i(\overline{x}, a_i) : i \leq \beta \}$ is a (partial) type of strictly smaller cardinality, so we can use 
the external inductive hypothesis (or the definition of type if finite) to find $\overline{d} \in X_{\mathbf{a}}$ realizing it. 
 Let $c_{\alpha} = c_{\beta} \smallfrown \langle \overline{d} \rangle$. %(below the ceiling we may concatenate). 

For limit $\alpha \leq \kappa < \min \{ \mathfrak{p}_{\mathbf{s}}, \mathfrak{t}_{\mathfrak{s}}\}$,  by Observation 10.1 there is a treetop 
$c_* \in \mct_\ma$ for $\langle c_{\beta} : \beta < \alpha \rangle$ with $n_* = \maxdom(c_*)$ below the ceiling.  
Now we correct to preserve condition (iii): for each $i<\alpha$, define 
$$n(i) = \mathrm{max}\{ n \leq_{\mathbf{a}} n_* : N_* \vDash \varphi_i[c_*(m), a_i] \mbox{ for all } n_{i+1} \leq_{\mathbf{a}} m \leq_{\mathbf{a}} n \}.$$
Then $\bar{n} := \langle n_\beta : \beta < \alpha \rangle$ is an increasing sequence, and $\{ n(i) : i < \alpha \}$ is a set all of whose elements are above $\bar{n}$. Let $\gamma$ be the co-initiality of this set.  Since $\cf(\alpha) + \gamma < \xp_\cs$, there is $n_{**}$ 
realizing the pre-cut so described. 
Let $c_{\alpha} = c_* \restriction_{n_{**}}$.  Note this case includes $\alpha = \kappa$, and $c_\kappa(n_\kappa)$ will realize the type. 
\end{proof}

\section{Upgraded trees} \label{s:upgraded}

\begin{quotation}
\noindent 
\emph{Any CSP has a certain basic amount of arithmetic, which allows us to build more powerful trees. Since this is 
easily true in our two running examples, we omit the details here.}
\end{quotation}

\br

So far, all of the trees we've considered were fairly simple: if $c \in \mct$ were evaluated at some element in its domain, we would get 
an element of a linear order, or a tuple of elements. In \cite[\S 5]{MiSh:998} it's shown that in any CSP we may do basic G\"odel coding.\footnote{Moreover, 
it's fairly natural. For example, to define addition on a given order $X_\ma$, let
$\vp_+(x,y,z) = (\exists \eta \in \mct_\ma : ( \lgn(\eta) = y ~\land ~ \eta(0) = x ~\land ~\eta(y-1) = z ~ \land
~(\forall i)(i<\lgn(\eta)) \rightarrow \eta(S(i)) = S(\eta(i))~) \}$. For multiplication, modify the previous formula to $\vp_\times$ which
increments by $x$ instead of by $1$. For exponentiation, increment by a factor of $x$, i.e.
$\eta(S(i)) = \eta(i) \cdot x$. And so on.}  This allows us to 
code subsets of certain orders as elements of other orders, and to find pairs of orders $\ma, \mb \in \ord(\cs)$ so that 
$\mct_\ma$ may be definably identified with a definable subset of $X_\mb$ -- in this case, we say ``$\ma$ is coverable by $\mb$.''  
We can also find $\ma$ and $\mb$ such that the tree $\mct_{\ma \times \ma}$ can be identified with a definable subset of $X_\mb$ -- 
in this case, we say ``$\ma$ is coverable as a pair by $\mb$.''\footnote{This is where we use the condition in \S \ref{s:CSP}  
about a well behaved ordering on some pair.} 

Since the coding and decoding are definable (in $M^+_1$), the effect of this is to free us to build more powerful trees. 
If $\ma$ is covered by $\mb$, we can find a definable subtree of $\mct_\mb$ whose elements are, say, 
tuples of elements of $\mct_\ma$ -- in other words, tuples of functions from $X_\ma$ to itself.  Or perhaps they are tuples 
consisting of some element of $X_\ma$, some function from $X_\ma$ to itself, some subset of $X_\ma$, and some other element of
$X_\ma$, all relating to each other in a certain way.

\begin{concl}
In what follows we use such ``upgraded trees'' without further comment, and given some $X_\ma$, we freely use the partial functions
$+$, $\times$, and exp.
\end{concl}

\section{Towards determining $\mathscr{C}(\mathbf{s})$} \label{s:aleph-0}

\begin{quotation}
\noindent 
\emph{With tools from the previous few sections, we work towards the central goal of showing that $\mcs(\cs, \xt_\cs) = \emptyset$. 
This includes a warm-up for the main lemma in the next section.}
\end{quotation}

\br

Let's first record that the analogue of Lemma \ref{symlem} holds for a general CSP, similarly to the ultrapower case. 
For details, see \cite[Lemma 6.1]{MiSh:998}. 

\begin{lemma} \label{symlem-CSP} 
Let $\cs$ be a CSP. 
If $\lambda \leq \mathfrak{p}_{\mathbf{s}}$ and $\lambda < \mathfrak{t}_{\mathbf{s}}$, then $(\lambda, \lambda) \not\in \mathscr{C}(\mathbf{s})$.
\end{lemma}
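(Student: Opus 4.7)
The plan is to mimic the proof of Lemma~\ref{symlem}, replacing ultrapower-specific steps by their CSP counterparts: closure of $\ord(\cs)$ under Cartesian products with a well behaved coordinate ordering, the existence of definable subtrees, pseudofiniteness, treetops below the ceiling for definable subtrees, and concatenation below the ceiling. Since $\lambda \leq \xp_\cs < \xp^+_\cs$ and $\lambda < \xt_\cs$, the Existence and Uniqueness results of \S\ref{s:uniqueness} apply, so without loss of generality any alleged $(\lambda,\lambda)$-cut $(\bar a,\bar b) = (\langle a_\alpha:\alpha<\lambda\rangle,\langle b_\alpha:\alpha<\lambda\rangle)$ may be assumed to live in an $X_\ma$ such that $\mb := \ma \times \ma$ exists in $\ord(\cs)$ with well behaved (e.g.\ G\"odel-pairing) coordinate ordering, arranged so $\mb$ is nontrivial.

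First I would set up the working tree: let $\mct' \subseteq \mct_\mb$ be the definable subtree of elements that are strictly increasing in the first coordinate and strictly decreasing in the second, exactly as in the uniqueness proof. Next, by induction on $\alpha<\lambda$, I build $c_\alpha \in \mct'$ with $n_\alpha := \maxdom(c_\alpha)$ below the ceiling, $\beta<\alpha \implies c_\beta \tlf c_\alpha$, and $c_\alpha(n_\alpha,0) = a_\alpha$, $c_\alpha(n_\alpha,1) = b_\alpha$. The base case uses nontriviality of $\mb$; successor stages use concatenation (legal since $n_\beta$ is below the ceiling). At a limit $\alpha<\lambda$, since $\cf(\alpha) < \lambda$ is strictly below both $\xp_\cs$ and $\xt_\cs$, the treetop observation for definable subtrees in \S\ref{s:uniqueness} provides an upper bound $c_* \in \mct'$ with $n_* = \maxdom(c_*)$ below the ceiling. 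I then back up: by pseudofiniteness, let $n_{**}$ be the maximum $n \leq_\mb n_*$ satisfying $c_*(n,0) <_\ma a_\alpha$ and $c_*(n,1) >_\ma b_\alpha$ (nonempty since each $n_\beta$ for $\beta<\alpha$ belongs to it, by the inductive hypothesis together with the cut property), and set $c_\alpha = c_* \rstr_{n_{**}}\,{}^\smallfrown \langle (a_\alpha,b_\alpha) \rangle$.

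Having constructed $\langle c_\alpha : \alpha < \lambda \rangle$, one final application of treetops (using $\lambda < \xt_\cs$; this $c_\star$ need not be below the ceiling) produces $c_\star \in \mct'$ above every $c_\alpha$; let $n_\star = \maxdom(c_\star)$. The strict monotonicity built into $\mct'$ then forces
\[ a_\alpha = c_\star(n_\alpha,0) <_\ma c_\star(n_\star,0) <_\ma c_\star(n_\star,1) <_\ma c_\star(n_\alpha,1) = b_\alpha \]
for every $\alpha < \lambda$, so $c_\star(n_\star,0)$ fills the alleged cut -- contradiction.

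The main obstacle is the limit step: we must simultaneously exploit $\lambda < \xt_\cs$ to secure an upper bound and $\lambda \leq \xp_\cs$ to keep its maximal domain below the ceiling, since otherwise the next concatenation would be illegal. This is precisely why the hypothesis reads $\lambda \leq \xp_\cs$ (not merely $\lambda < \xp_\cs$): it guarantees that at every limit $\alpha < \lambda$ we have $\cf(\alpha) < \xp_\cs$ and so land in the scope of the second clause of the treetops observation. The pseudofiniteness back-up step is routine once such a treetop is in hand, and the concluding contradiction falls out from the monotonicity in $\mct'$ without any delicate matching between the two halves of the cut -- the nested-pairs tree already encodes everything needed.
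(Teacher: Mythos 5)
Your overall plan is right and matches what the paper intends: adapt the proof of Lemma~\ref{symlem} to the CSP setting using closure under Cartesian products, treetops below the ceiling for definable subtrees, and pseudofiniteness. The bookkeeping of hypotheses ($\cf(\alpha)<\lambda\leq\xp_\cs$ and $\lambda<\xt_\cs$) and the use of the back-up step at limits are correct.

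However, there is a genuine gap in the definition of the working tree, and it surfaces precisely at the concluding chain of inequalities. You define $\mct'$ as the subtree of $\mct_\mb$ that is ``strictly increasing in the first coordinate and strictly decreasing in the second, exactly as in the uniqueness proof.'' But the uniqueness proof's tree imposes only those two separate monotonicity conditions; it does \emph{not} impose the concentricity condition $x(n,0) <_\ma x(n,1)$, because there the two coordinates track two unrelated cuts. In Lemma~\ref{symlem} by contrast, the subtree is explicitly the one describing \emph{concentric pre-cuts}, i.e.\ it also requires $x(j,0) < x(j,1)$ at every node. Without that clause, the inequality $c_\star(n_\star,0) <_\ma c_\star(n_\star,1)$ in your final display is not forced by the tree: from monotonicity alone you only get $a_\alpha < c_\star(n_\star,0)$ and $c_\star(n_\star,1) < b_\alpha$ for all $\alpha$, and if the cut were genuinely unfilled, it is exactly consistent that $c_\star(n_\star,0) \geq b_\gamma$ for some $\gamma$ and $c_\star(n_\star,1) \leq a_\delta$ for some $\delta$ (hence $c_\star(n_\star,1) < c_\star(n_\star,0)$) -- no contradiction is reached.

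The fix is either to build $\mct'$ as the concentric subtree, exactly as in Lemma~\ref{symlem} (so the constraint $x(n,0) <_\ma x(n,1)$ is definable and enforced at every node, and the middle inequality is then automatic at $n_\star$), or, keeping your non-concentric tree, to insert one more pseudofiniteness step at the end: take $m$ to be the maximal element of the bounded nonempty definable set $\{n \leq_\mb n_\star : c_\star(n,0) <_\ma c_\star(n,1)\}$; this set contains every $n_\alpha$ (since $c_\star(n_\alpha,0)=a_\alpha < b_\alpha = c_\star(n_\alpha,1)$), so $m$ lies strictly above all $n_\alpha$, and $c_\star(m,0)$ then fills the cut. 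Either repair is short, but as written the final step does not go through.
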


In order to prove that $\mathfrak{p}_{\mathbf{s}} \geq \mathfrak{t}_{\mathbf{s}}$, we will need to show that there are no \textit{asymmetric cuts} below $\mathfrak{t}_{\mathbf{s}}$.  To this end, we begin with an easier lemma as a warm-up for the main goal\footnote{The theorem in the next 
section will supercede ths lemma, but the present proof is simpler and motivates many of the ideas there.} in the next section. 
This is a condensed version of \cite{MiSh:998} \S 7. 

\begin{lemma} \label{firstlemma}
Suppose $(\aleph_0, \aleph_1) \notin \mcs(\cs, \xt_\cs)$. Then also $(\aleph_0, \lambda) \notin \mcs(\cs, \xt_\cs)$ for all 
regular $\lambda< \min \{ \xp^+_\cs, \xt_\cs \}$. 
\end{lemma}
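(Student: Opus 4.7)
The plan is by contrapositive: suppose some regular $\lambda$ with $\lambda < \min\{\xp^+_\cs, \xt_\cs\}$ satisfies $(\aleph_0, \lambda) \in \mcs(\cs, \xt_\cs)$, and deduce $(\aleph_0, \aleph_1) \in \mcs(\cs, \xt_\cs)$. Lemma \ref{symlem-CSP} rules out $\lambda = \aleph_0$, and if $\lambda = \aleph_1$ there is nothing to show. Otherwise $\lambda > \aleph_1$, and by the Uniqueness Lemma in \S \ref{s:uniqueness} applied with $\kappa = \aleph_0$, such a $\lambda$ is unique (it is $\lcf(\aleph_0)$). The content of the lemma thus reduces to the claim \emph{if $\lcf(\aleph_0) < \xt_\cs$ then $\lcf(\aleph_0) = \aleph_1$}, which I prove by producing a $(\aleph_0, \aleph_1)$-cut from a hypothetical $(\aleph_0, \lambda)$-cut with $\aleph_1 < \lambda < \xt_\cs$.

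Fix such a cut $(\langle a_n : n < \omega \rangle, \langle b_\alpha : \alpha < \lambda \rangle)$ in some nontrivial $X_\ma \in \ord(\cs)$. Following the pattern of Lemma \ref{symlem}, work in the definable subtree $\mct'$ of $\mct_{\ma \times \ma}$ consisting of sequences strictly increasing in the first coordinate, strictly decreasing in the second, whose values $(x, y)$ satisfy $x <_\ma y$. By induction on $\alpha < \aleph_1$, build a $\tlf$-increasing chain $\langle c_\alpha : \alpha < \aleph_1 \rangle$ in $\mct'$ below the ceiling with $n_\alpha := \maxdom(c_\alpha)$ and $c_\alpha(n_\alpha, 1) = b_\alpha$, maintaining the invariant $c_\alpha(n_\alpha, 0) <_\ma b_{\aleph_1}$ so that future concatenations can continue. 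Successor stages concatenate with a first coordinate chosen via Lemma \ref{satlem}; countable limit stages take a treetop (from the treetop-for-definable-subtrees observation in \S \ref{s:uniqueness}, available since $\aleph_0 < \xt_\cs$), truncate to $\max\{k \leq n_* : c_*(k,0) <_\ma b_{\aleph_1}\}$ to preserve the invariant, and then back up in the domain to match $b_\alpha$ as in the limit case of Lemma \ref{symlem}. Since $\aleph_1 < \xt_\cs$, the chain itself has a treetop $c_\star \in \mct'$ with $n_\star := \maxdom(c_\star)$.

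By strict monotonicity, $c_\star(n_\star, 1) <_\ma b_\alpha$ for all $\alpha < \aleph_1$. If additionally $c_\star(n_\star, 1) >_\ma a_n$ for all $n < \omega$ and $<_\ma b_\alpha$ for all $\alpha < \lambda$, it fills the original cut, a contradiction. Otherwise, extract a $(\aleph_1, \aleph_0)$-cut from $c_\star$ as follows: for each $n < \omega$, pseudofiniteness gives (when nonempty) a least $k_n \leq_\ma n_\star$ with $c_\star(k_n, 1) \leq_\ma a_n$, and this weakly decreasing $\omega$-sequence $\langle k_n \rangle$ sits strictly above the strictly increasing $\aleph_1$-sequence $\langle n_\alpha : \alpha < \aleph_1 \rangle$ (since $c_\star(n_\alpha, 1) = b_\alpha >_\ma a_n$). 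The pre-cut $(\langle n_\alpha \rangle, \langle k_n \rangle)$ is the candidate $(\aleph_1, \aleph_0)$-cut: any filler $m$ would place $c_\star(m, 1)$ strictly above $\bar a$ and below $\langle b_\alpha : \alpha < \aleph_1 \rangle$, and a further case split on whether $c_\star(m, 1) <_\ma b_\alpha$ for every $\alpha < \lambda$ either fills the original cut (contradiction) or leaves $c_\star(m, 1)$ pinned above some $b_{\alpha_1}$ with $\alpha_1 \in [\aleph_1, \lambda)$, allowing us to iterate the argument on the shifted sub-interval. By the symmetry $(\kappa_1, \kappa_2) \in \mcs(\cs) \iff (\kappa_2, \kappa_1) \in \mcs(\cs)$ noted after the Uniqueness Lemma, the resulting $(\aleph_1, \aleph_0)$-cut yields a $(\aleph_0, \aleph_1)$-cut in $\mcs(\cs, \xt_\cs)$ (since $\aleph_1 < \xt_\cs$), contradicting the hypothesis.

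The main obstacle is the sub-case in which a hypothetical filler $m$ forces $c_\star(m, 1) \geq_\ma b_{\alpha_1}$ for some $\alpha_1 \in [\aleph_1, \lambda)$: closing this requires either iterating the $\aleph_1$-length tree construction on the sub-interval indexed by $[\alpha_1, \lambda)$ until the iteration exhausts $\lambda$, or a sharper use of the uniqueness of $\lcf(\aleph_0)$ to derive a direct contradiction. The rest---tree construction, limit-stage bookkeeping, and pseudofiniteness extraction---runs parallel to Lemma \ref{symlem}, so the new content is concentrated in this final case analysis.
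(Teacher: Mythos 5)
The plan and the final extraction do not work, for a reason the Uniqueness Lemma itself makes clear. From $(\aleph_0,\lambda) \in \mcs(\cs)$ we get $\lcf(\lambda) = \aleph_0$ by symmetry; since $\lcf$ is injective on its domain (two distinct regular $\kappa < \min\{\xp_\cs^+,\xt_\cs\}$ cannot share the same $\theta$, again by Uniqueness) and $\aleph_1 \neq \lambda$, necessarily $\lcf(\aleph_1) \neq \aleph_0$. So your pre-cut $(\langle n_\alpha : \alpha < \aleph_1 \rangle, \langle k_n : n < \omega \rangle)$ is \emph{guaranteed} to have a filler: the ``otherwise'' branch you flag as the main obstacle is not a secondary sub-case but the only case. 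And the proposed iteration cannot close it: nothing bounds the slip $\alpha_1$ at each round below $\lambda$, and with $\lambda$ regular $> \aleph_1$ there is no reason the iteration terminates short of $\lambda$ rounds, at which point you have merely rebuilt a $(\lambda,\aleph_0)$-cut --- already known to exist from symmetry, hence not a contradiction. (There is a lesser issue too: if $c_\star(n_\star,1)$ lands at or above some $b_{\alpha_1}$ with $\alpha_1 \geq \aleph_1$ while also above all $a_n$, then no $k_n$ is defined and the extraction does not even begin.)

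The mechanism you are missing is the one that actually drives the paper's proof. The paper chooses the cut with the $\omega$-side \emph{standard} ($d_\delta = S^\delta(0_\ma)$) and works in a tree with a \emph{third} coordinate: a definable partial injection from $X_\ma$ into the interval named by the first two coordinates, whose domain carries a fixed family of $\aleph_1$ distinguished constants along the $\lambda$-indexed chain (this uses the coverability of $\ma$ by $\mb$ from \S \ref{s:upgraded}). The contradiction is then about \emph{size}, not about producing a small cut: after the final treetop, each carried constant falls out of the domain of the third-coordinate functions at some point; because the cut's left side is standard, the interval at that point has \emph{finite} right endpoint; pigeonhole over $\aleph_1$ then yields a definable injection of a set of size $> d_\star$ into a set of size $d_\star$. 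Without this size mechanism, there is no route to a contradiction; the proposal is in effect trying to directly construct the $(\aleph_0,\aleph_1)$-cut that Uniqueness already tells you does not exist once $\lcf(\aleph_0) = \lambda > \aleph_1$.
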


\begin{proof}
We may reduce to proving:  if $\lambda$ is regular, 
$\aleph_1 < \lambda = \mathfrak{p}_{\mathbf{s}}$ and $\lambda < \mathfrak{t}_{\mathbf{s}}$, then $(\aleph_0, \lambda) \not\in \mathscr{C}(\mathbf{s})$.    Note this means  $(\aleph_0, \aleph_0) \notin \mcs(\cs)$ by \ref{symlem-CSP}, $(\aleph_1, \aleph_1) \notin \mcs(\cs)$ by 
$\ref{symlem-CSP}$, $(\aleph_0, \aleph_1) \notin \mcs(\cs)$ by assumption. So both $\aleph_0$ and $\aleph_1$ have co-initiality $\geq \aleph_2$. 

Let's assume that $(\aleph_0, \lambda) \in \mcs(\cs)$ and we will arrive at a contradiction. 

Once again we'll build a tree, but let's be a bit more careful in our setup.  First, choose some nontrivial $\ma$ and $\mb$ so that 
$\ma$ is coverable by $\mb$ in the sense of the previous section, \emph{i.e.} $\mct_\ma$ may be identified with a definable 
subset of $X_\mb$.  Let's choose to study\footnote{Why may we simply choose \emph{some} sequence 
in \emph{some} given $X_\ma$? Recall the end of \S \ref{s:uniqueness}.}
 the cut above the ``standard copy of $\omega$'' in $X_\ma$, i.e. the sequence given by $d_\delta = 0_\ma$ and 
$d_\delta = S^{\delta}(0_\ma)$, for $\delta < \omega$.  
By existence and uniqueness, there is a sequence $\langle e_\alpha : \alpha < \lambda \rangle$ in $X_\ma$ such that 
\[ ( \langle d_\delta : \delta < \omega \rangle, \langle e_\alpha : \alpha < \lambda \rangle) \]
form a cut in $X_\ma$. Without loss of generality, we may assume any two consecutive elements of the sequence $\bar{e}$ 
are infinitely far apart (if not, as $\lambda$ is regular uncountable, just thin the sequence by taking every element 
whose index is divisible by $\omega$). 

Second, fix $\aleph_1$-many distinguished elements of $X_\ma$: $\{ a_{i} \in X_{\mathbf{a}} : i < \aleph_1 \}$. \footnote{Note our 
setup ensures $X_\ma$ has uncountably many elements, and moreover, that any two elements of $X_\ma$ which are 
infinitely far apart have uncountably many elements between them.}

Third, let's fix our tree: let $X_{\mathbf{c}} = X_{\mathbf{a}} \times X_{\mathbf{a}} \times X_{\mathbf{b}}$, so that $\mct_{\mathbf{c}}$ consists of sequences of triples. Consider the definable subtree $\mct \subseteq \mct_\bc$ consisting of those $x$ such that for $n \leq \maxdom(x)$:
\begin{enumerate}[1)]
\item $x(n,0) <_\ma x(n,1)$,
\item $x(n,2)$ is a partial injective map $($\emph{i.e. an element of $\mct_\ma$ considered as a 
definable subset of $X_\mb$}$)$ from $X_\ma$ into the interval $[x(n,0), x(n,1)]_\ma := \{ a \in X_\ma : x(n,0) \leq_\ma a \leq_\ma x(n,1) \}$,
\item $n <_{\bc} m $ implies $x(m,0) <_\bc x(n,1)$ and $\dom( x(m,2) ) \subseteq \dom( x(n,2) )$. 
\end{enumerate}

Informally, for each $n$, the first two coordinates are endpoints of an interval, moving ``left'' towards $0_\ma$ as the path advances. The 
third is a partial function into that interval. Note this is allowed by the previous section: the third coordinate is an element of 
the tree $\mct_\ma$, so a partial function from $X_\ma$ to itself. 

By induction on $\alpha < \lambda$, we will\footnote{Our idea will be to choose a path through the tree so that for $\alpha < \lambda$,   
$c_\alpha(n_\alpha, 0)$ and $c_\alpha(n_\alpha, 1)$ align with $e_{\alpha+1}, e_\alpha$, and the function 
$c_\alpha(n_\alpha, 2)$ is an injective map whose range is bounded by these endpoints and whose domain contains the $\aleph_1$ distinguished constants.   
Advancing along a branch, we are effectively moving left through the nonstandard elements towards the standard ones.  
At each step, we injectively map at least $\aleph_1$-many distinct elements into the interval at hand. 
If we can ``carry'' this all the way, we'll find a contradiction on grounds of size as many constants overspill in to the domain(s) of some third coordinate function(s) with standard endpoints.}
 build a path $\langle c_{\alpha} \in \mct : \alpha < \lambda \rangle$ with $n_{\alpha} = \textrm{max dom}(c_{\alpha})$ such that:
\begin{enumerate}[i)]
\item $n_{\alpha}$ is below the ceiling,
\item $e_{\alpha+1} \leq_{\mathbf{a}} c_{\alpha}(n_{\alpha},0) <_{\mathbf{a}} c_{\alpha}(n_{\alpha},1) \leq_{\mathbf{a}} e_{\alpha}$,
\item $a_i \in \mathrm{dom}(c_{\alpha}(n, 2))$ for each $i < \aleph_1$ and $n \leq_{\mathbf{a}} n_{\alpha}$.
\end{enumerate}

\noindent \underline{For $\alpha = 0$}.  We would like a definable partial injection\footnote{Note this is 
an $\ord$-type:  we are asking  
for an element of $\mct_\ma$ (definably identified with a definable subset of $X_\mb$); we'll use ``definable partial injection'' 
in this way for the rest of the proof.  The type uses parameters $\{ a_i : i < \aleph_1 \} \cup \{ e_1, e_0 \}$ plus finitely many needed to code $\mct_\ma$ in $X_\mb$.} 
 from $X_\ma$ to itself whose domain contains $\{a_i : i < \aleph_1 \}$, with range in the interval $[e_1,  e_0]_\ma$.  Since there are uncountably many elements between $e_1$ and $e_0$, this is consistent, and may be described by a type over a set of size $\aleph_1 < \min(\mathfrak{p}_{\mathbf{s}}, \mathfrak{t}_{\mathbf{s}})$.  So by Lemma \ref{satlem}, this is realized by some $f \in X_{\mathbf{b}}$.  Let $c_0 = (e_1, e_0, f)$ with $n_0 = 0_{\mathbf{c}}$.

\sbr
\noindent \underline{For $\alpha = \beta+1$}. As before, the type describing a definable partial injection from $X_\ma$ to $X_\ma$ whose domain contains all $\{a_i : i < \aleph_1 \}$ and is contained in the definable set $\mathrm{dom}(c_{\beta}(n_{\beta}, 2))$, and whose range is contained in $(e_{\alpha+1}, e_\alpha)_\ma$, is a 
consistent type over a set of size $\aleph_1$ so realized by some $g$. Since $n_{\beta}$ is below the ceiling, we may concatenate $(e_{\alpha+1}, e_\alpha, g)$ to $c_{\beta}$.

\sbr
\noindent \underline{For $\alpha$ a limit}.  Since $\alpha < \lambda < \mathfrak{t}_{\mathbf{s}}$, there is some upper bound $c_* \in \mct$ for $\langle c_{\beta} : \beta < \alpha \rangle$ with $n_*$ below the ceiling.  To ensure we did not overshoot $e_{\alpha}$, we can restrict $c_*$ to $N = \mathrm{max} \{ n \leq_{\mathbf{c}} n_* : e_{\alpha} < _{\mathbf{a}} c_*(n,0) \}$.  We also need to ensure that all the $a_i$ remain in the domain 
of all the third coordinate functions.  So for each $i < \aleph_1$, we can define $n(i) = \mathrm{max}\{ n \leq_{\mathbf{c}} N : a_i \in \mathrm{dom}(c_*(n,2)) \}$.  By inductive 
hypothesis, $n_\beta <_\bc n(i) $ for each $i < \aleph_1$ and each $\beta < \alpha$. 
The pre-cut\footnote{Set notation as the right need not be a descending sequence, but will have co-initiality $ \leq \aleph_1$.}
 $(\{ n_{\beta} : \beta < \alpha \}, \{ n(i) : i < \aleph_1 \})$ is filled by some $n_{**}$ since $\cf(\alpha)+ \aleph_1 < \mathfrak{p}_{\mathbf{s}}$.  
The Or-type describing a definable partial injection $f$ from $X_\ma$ to $X_\ma$ with $\{a_i : i < \aleph_1 \} \subseteq \mathrm{dom}(f) \subseteq \mathrm{dom}(c_*(n_{**}, 2))$ is again consistent and requires $\aleph_1$ parameters. Let $f_\alpha$ realize it.  Finally, let $c_{\alpha} = {c_* \restriction_{n_{**}}} ^\smallfrown (e_{\alpha+1}, e_{\alpha}, f_{\alpha})$.  This completes the induction. 

\sbr
As $\lambda < \xt_\cs$, our sequence $\langle c_\alpha : \alpha < \lambda \rangle$ has an upper bound $c_\star \in \mct$. Let 
$n_\star = \maxdom(c_\star)$. 
%We claim that this will contradict the fact that our original $(\langle d_{\delta} \in X_{\mathbf{a}} : \delta < \omega \rangle, \langle e_{\alpha} \in %X_{\mathbf{a}} : \alpha < \lambda \rangle)$ was a \textit{cut}.  
For each $i < \aleph_1$, define $m(i) = \max \{m \leq_{\mathbf{a}} n_{\star} : a_i \in \mathrm{dom}(c_{\star}(m,2)) \}$,  
point where $a_i$ ``fell out'' of the domain of $(c_\star(m,2))$. 
By our inductive construction, for each $i<\aleph_0$ and each $\alpha < \lambda$, $n_\alpha <_\bc m(i)$. 
So for each $i < \aleph_0$, it must be that $c_\star(m(i), 1) = d_\delta$ for some $\delta$ (recalling that $(\bar{d}, \bar{e})$ is a cut). 
So by the pigeonhole principle, there is some standard (=finite) $d_\star$ such that $c_\star(m(i), 1) = d_\star$ for uncountably many $i$.
Let $W$ be the set of such $i$.  

Now for the contradiction. 
Notice what we've shown is that for each $i \in W$, there is $m(i)$ such that the function $f_i := c_\star(m(i), 2)$ is into 
$[0, d_\star]_\ma$ and $f_i$ has $a_i$ in its domain. A priori, this does {not} say that there is a single $f_i$ with 
many elements in its domain, so we now have two cases. If there exists an infinite $W^\prime \subseteq W$ on which 
the function $i \mapsto m(i)$ is constant, let $m_\star$ denote ths constant value. Then $f_{m_\star} = c_\star(m_\star, 2)$ 
is indeed an injection with infinite domain and finite range, a contradiction. Otherwise, there is an infinite set $W^{\prime\prime}$ on 
which the function $i \mapsto m_i$ is one-to-one. Then the function $g$ given by $a_i \mapsto c_\star(m_i, 2)(a_i)$ is one-to-one on $W^{\prime\prime}$ 
because $k <_\bc \ell$ implies $c_\star(\ell, 1) <_\ma c_\star(k,0)$. So the restriction of $g$ to any subset of $W^{\prime\prime}$ of size $d_\star+2$  
is a definable injection of a finite set of size $d_\star+2$ into a set of size $d_\star$, a contradiction. This completes the proof.  
\end{proof}

This section isn't only pedagogical, but reproduces the arc of writing \cite{MiSh:998}. 
Although now superceded by the main result of the next 
section, this was a first major advance in understanding the picture of cuts. 
At the time the case of $(\lambda, \lambda^+)$-cuts with $\lambda^+ = \xp$ remained mysterious.

 \section{No asymmetric cuts}

\begin{quotation}
\noindent 
\emph{Extending ideas from the previous section, we prove the main lemma ruling out asymmetric cuts below treetops, which completes the proof that $\mcs(\cs, \xt_\cs) = \emptyset$.}
\end{quotation}

\br

We'll now sketch the proof of the lemma ruling out all asymmetric cuts below treetops, \cite{MiSh:998} \S 8. 
For this will need to upgrade the argument from \S \ref{s:aleph-0} to handle a $(\kappa, \lambda)$-cut for arbitrary
$\kappa < \lambda = \xp_\cs < \xt_\cs$. Some points to notice in the earlier proof: 
\begin{enumerate}
\item There, the presumed cut had its left side $\bar{d}$ consisting of standard elements, and its right side $\bar{e}$ infinitely spaced.
\item We ``carried'' a set of size $\aleph_1$ into the left side where we got a contradiction for size reasons. (The choice of $\bar{d}$ 
mattered: we would have had trouble 
with the contradiction if $\bar{d}$ were some widely spaced $\omega$-indexed sequence.)
\item The presumed cut was of type $(\aleph_0, \lambda)$, and $\aleph_1 < \min \{ \xp_\cs, \xt_\cs \}$. We needed this inequality to 
apply local saturation at steps $\alpha < \lambda$. Notice $\aleph_1 = \aleph_0^+$. 
\end{enumerate}
When it comes to more general $(\kappa, \lambda)$-cuts: 
\begin{enumerate}
\item[(1)$^\prime$] The point above was that in the sequence $(\bar{d}, \bar{e})$ witnessing the cut, successive elements of the left hand side 
`grow in cardinality,' whereas successive elements of the right-hand side are `widely spaced'.
\item[(2)$^\prime$] If $\bar{d}$ is a $\kappa$-indexed sequence, many of its elements may be far apart. It won't in general be sufficient to carry a set of size $\kappa^+$ into the sequence $\bar{d}$ to obtain a contradiction.\footnote{Think of the diagonal embedding of an uncountable $\kappa$ in a regular ultrapower.} We need to keep better track of size. However, we'll see that CSPs have a natural internal notion of cardinality. 
\item[(3)$^\prime$] If $\kappa^+ = \lambda = \xp_\cs$, for a contradiction we would need to carry $\kappa^+$ constants as we go along, 
but with $\kappa^+ = \xp_\cs$ we can't obviously apply local saturation which requires $<$. We may solve this by growing the number of 
constants we carry with $\alpha$: the constant $a_\alpha$ is added to the domain of the functions by stage $\alpha+1$. So at each stage 
$\alpha < \kappa^+$, we have $\leq \kappa$ constants to carry. (If $\kappa^+ < \lambda$, this is excessive caution, but if 
$\kappa^+ = \lambda = \xp_\cs$ it is key.) 
\end{enumerate}

\br

\noindent 
Let us now set the stage. 
First, we'll need an internal notion of cardinality, following \cite{MiSh:998} \S 5.  Suppose $\ma, \mb \in \ord(\cs)$ and let 
$\bc = \ma \times \mb$. Let $\prt(\ma, \mb)$ be the definable subtree of $\mct_\bc$ consisting of $x$ such that 
$\{ ~( x(n,0), x(n,1) ) : n \leq_\bc \maxdom(x) ~\}$ is the graph of a partial one-to-one map from $X_\ma$ to $X_\mb$.  
Working in $M^+_1$, if $A \subseteq X_\ma$ and $B \subseteq X_\mb$ are definable sets, let us write ``$|A| \leq^\cs |B|$'' to mean there exists 
$x \in \prt(\ma, \mb)$ such that $A \subseteq \{ x(n,0) : n \leq_\bc \maxdom(x) \}$ and 
$B \subseteq \{ x(n,1) : n \leq_\bc \maxdom(x) \}$.  Write ``$|A| <^\cs |B|$'' if   
$|A| \leq^\cs |B|$ and and no $x \in \prt(\ma, \mb)$  
witnesses $|B| \leq^\cs |A|$. 
This definition allows us to make sense of relative size for any elements $a, b$ in some $X_\ma$: let 
``$|a| \leq^\cs |b|$'' mean $| \{ x \in X_\ma : x \leq_\ma a \}| \leq^\cs | \{ x \in X_\ma : x \leq_\ma b \}|$. 

Second, we'll need to select a suitable cut. Suppose $\kappa < \lambda = \xp_\cs < \xt_\cs$ and 
$(\kappa, \lambda) \in \mcs(\cs)$. Suppose we are given a nontrivial $\ma$. 
Then with a little work we may choose a cut $(\langle d_\beta : \beta < \kappa \rangle, \langle e_\alpha : \alpha < \lambda \rangle)$ in $X_\ma$ so that 
the left-hand side grows in internal cardinality, meaning that $\beta < \beta^\prime$ implies $|d_\beta| <^\cs |d_{\beta^\prime}|$ in the 
sense just given, and the right-hand side is 
widely
 spaced, meaning that $e_{\alpha+1} +_\ma e_{\alpha+1} <_\ma e_\alpha$.\footnote{This construction is \cite{MiSh:998}, Claim 8.2. It amounts to building a branch through a 
carefully designed tree, noting that branches are long and it is easy to satisfy these conditions when the values are finite. 
Recall from \S \ref{s:upgraded} that we have addition within each $X_\ma$. } 

Third, we'll need a fact:\footnote{Any symmetric function such that $\alpha < \alpha^\prime < \beta$ implies 
$g(\alpha, \beta) \neq g(\alpha^\prime, \beta)$ will do.}

\begin{fact} \label{fact5}
 There is some symmetric $g: \kappa^+ \times \kappa^+ \rightarrow \kappa$ such that for any $W \subset \kappa^+$ with $|W| = \kappa^+$, then $\mathrm{sup}(\mathrm{range}(g \restriction_{W \times W})) = \kappa$.
 \end{fact}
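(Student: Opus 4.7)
The plan is to construct $g$ so that, for every $\beta<\kappa^+$, the section $g(\,\cdot\,,\beta)$ is injective on $\beta$, as suggested in the footnote. Since $|\beta|\leq\kappa$ whenever $\beta<\kappa^+$, fix once and for all an injection $f_\beta:\beta\hookrightarrow\kappa$, and set
\[
g(\alpha,\beta)=\begin{cases} f_{\max(\alpha,\beta)}(\min(\alpha,\beta)) & \text{if } \alpha\neq\beta,\\ 0 & \text{if } \alpha=\beta.\end{cases}
\]
This is manifestly symmetric. For any $\alpha<\alpha'<\beta$, $g(\alpha,\beta)=f_\beta(\alpha)$ and $g(\alpha',\beta)=f_\beta(\alpha')$, which are distinct by injectivity of $f_\beta$; so the footnote's sufficient condition holds.

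To verify the unboundedness property, fix $W\subseteq\kappa^+$ with $|W|=\kappa^+$. Since $\kappa^+$ is regular, enumerate $W$ in increasing order and let $\beta$ be its $\kappa$-th element, so that $|W\cap\beta|=\kappa$. Then $g(\,\cdot\,,\beta)\restriction(W\cap\beta)=f_\beta\restriction(W\cap\beta)$ is an injection into $\kappa$, so its image is a subset of $\kappa$ of cardinality $\kappa$. Any such subset is unbounded in $\kappa$, since a bounded subset of $\kappa$ has cardinality at most its bound, which is strictly less than $\kappa$. Therefore $\sup(\operatorname{range}(g\restriction_{W\times W}))=\kappa$, as required.

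The construction is largely forced by the footnote, and the only conceptual point is to pick the witnessing $\beta$ \emph{inside} $W$ rather than hoping for a single absolute witness; once that is done, each $W$ supplies its own injective slice $f_\beta\restriction(W\cap\beta)$. No further combinatorial input is needed, and in particular the argument is insensitive to whether $\kappa$ is regular or singular, since it uses only the elementary fact that bounded subsets of $\kappa$ have cardinality $<\kappa$.
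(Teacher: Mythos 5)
Your proof is correct and carries out exactly the construction hinted at in the paper's footnote (the paper gives no argument beyond the footnote stating that any symmetric $g$ with $\alpha<\alpha'<\beta\implies g(\alpha,\beta)\neq g(\alpha',\beta)$ suffices). You explicitly build such a $g$ from the injections $f_\beta:\beta\hookrightarrow\kappa$ and then verify the unboundedness by picking the $\kappa$-th element $\beta$ of $W$, which makes $g(\cdot,\beta)\restriction(W\cap\beta)=f_\beta\restriction(W\cap\beta)$ an injection of a size-$\kappa$ set into $\kappa$; since $\kappa$ is a cardinal, every $\gamma<\kappa$ has $|\gamma|<\kappa$, so the range is unbounded. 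Your closing remark that the argument does not require $\kappa$ to be regular is also accurate, though in the paper's application $\kappa$ is always regular.
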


We're ready for the main lemma; we'll sketch here the main points of the proof. 

\begin{theorem-kl}[\cite{MiSh:998} Theorem 8.1] Let $\cs$ be a cofinality spectrum problem and let $\kappa, \lambda$ be regular. 
If $\kappa < \lambda \leq \mathfrak{p}_{\mathbf{s}}$ and $\lambda <  \mathfrak{t}_{\mathbf{s}}$, 
then $(\kappa, \lambda) \not\in \mcs(\cs, \xt_\cs)$.
\end{theorem-kl}

\begin{proof}[Proof sketch] 
Suppose for a contradiction that $(\kappa, \lambda) \in \mcs(\cs, \xt_\cs)$. This time, choose some nontrivial $\ma$ and $\ma^\prime \in \ord(\cs)$ 
so that $\ma$ is coverable as a pair by $\ma^\prime$, i.e., $\mct_{\ma\times\ma}$ may be definably identified with a definable subset of 
$X_{\ma^\prime}$. Fix for a contradiction a cut $(\langle d_\beta : \beta < \kappa \rangle, \langle e_\alpha : \alpha < \lambda \rangle)$ in $X_\ma$ 
as described above, with the left side strictly growing in internal cardinality, and the right side well spaced. 
Fix $g: \kappa^+ \times \kappa^+ \rightarrow \kappa$, an outside function satisfying Fact \ref{fact5} 
which will help in our bookkeeping. 

Finally, fix an order $\mb$ such that $X_\mb = X_\ma \times X_\ma \times X_\ma \times X_{\ma^\prime} \times X_{\ma^\prime} \times X_{\ma^\prime}$. 
This $\mb$ is the order we'll work in, and our chosen tree will be a definable subtree of $\mct_\mb$. \emph{In this proof, cardinality will 
always mean internal cardinality.}

\sbr

Let $\mct$ be the definable subtree of $\mathcal{T}_{\mb}$ consisting of $x$ as follows. (The informal small print describes 
what the intention or use will be in the inductive construction.)

\begin{itemize}
\item $n <_\mb n^\prime \leq_\mb \maxdom(x)$ implies 
\[ x(n^\prime, 0) <_\ma x(n^\prime, 1) <_\ma x(n^\prime, 2) <_\ma x(n,1). \]

\begin{tiny}
\noindent The first three coordinates move leftwards together towards the cut. First is a `marker,' 
keeping track of leftward progress, followed by the endpoints of an interval as before.
\end{tiny}
\item $x(n,3)$ is a nonempty subset of $X_\ma$ of size ``no more than half of $X_\ma$,'' i.e. 
\[ |x(n,3) | \leq^\cs |X_\ma \setminus x(n,3) |. \]

\begin{tiny}
\noindent This is the definable domain of the definable function $x(n,5)$. We could incorporate this into the definition of 
$x(n,5)$ but we list it separately for clarity. The size constraint will help in the induction. 
\end{tiny}

\item $x(n,4)$ is a symmetric two-place function with domain $x(n,3) \times x(n,3)$ and range $\subseteq X_\ma$.

\begin{tiny}
\noindent This will be our ``distance estimate function,'' which takes in a pair of elements in the domain of $x(n,5)$ and 
returns a lower bound on how far apart their images under $x(n,5)$ must be, see next. 
\end{tiny}
\item $x(n,5)$ is a 1-to-1 function from $x(n,3)$ into the interval $( x(n,1), x(n,2) )_\ma$ which 
\emph{respects the distance estimate function}, meaning\footnote{Here an expression like $|c-d|$ stands for  
$| \{ z  \in X_\ma : c \leq z \leq d \} |$.} 
\[ a \neq b \in x(n,3) \implies |x(n,4) (a,b) | \leq^\cs | x(n,5)(a) - x(n,5)(b) |. \] 

\begin{tiny}
\noindent Note that the function $x(n,5)$ forces the 
interval it maps into to be large in a sense controlled by the distance estimate function.   
\end{tiny}

\item if $n <_\mb n^\prime \leq_\mb \maxdom(x)$, then for any $a, b \in X_\ma$ such that 
\[ (\forall m) (n \leq_\mb m \leq_\mb n^\prime \implies \{ a,b \} \subseteq x(m,3) \]
we have that $x(n,4)(a,b) = x(n^\prime, 4)(a,b)$. 

\begin{tiny}
\noindent As long as two elements stay continuously in the domain of the fifth-coordinate function, 
the distance estimate function on them does not change.  This will be crucial to handling overspill. 
\end{tiny}

\end{itemize}

\noindent Now by induction on $\alpha < \lambda$ we'll choose $c_\alpha \in \mct$ and $n_\alpha = \maxdom(c_\alpha)$, with 
$\beta < \alpha \implies c_\beta \tlf c_\alpha$, satisfying the hypotheses below. 
When $\alpha = \beta+1$ is a successor ordinal $<\kappa^+$, we'll also choose a new constant $y_{\beta+1}$. (In this proof we'll 
have $\kappa^+$ distinguished elements of $X_\ma$ comprising the set we ``carry along towards the cut,'' and there is no harm in assuming they are all indexed by successor ordinals $<\kappa^+$.) 
In the induction, we would like to ensure:

\sbr

\noindent\underline{For all $\alpha < \lambda$}.

\begin{enumerate}[(a)]
\item if $\beta < \alpha$, then $e_{\alpha+1} \leq_\ma c_\alpha(n_\alpha, 0) <_\ma c_\alpha(n_\alpha, 1) <_\ma c_\alpha(n_\alpha, 2) <_\ma e_{\beta+1}$, and if $\alpha = \beta+1$, then in addition $c_\alpha(n_\alpha, 0) = e_{\alpha+1}$. \\
\begin{tiny}
\noindent The marker moves left followed by the two endpoints, and keeps pace with $\bar{e}$.
\end{tiny}

\item For all $\gamma < \min \{ \alpha, \kappa^+ \}$, 
\begin{itemize}
\item $y_{\gamma+1} \in c_\alpha(n_\alpha,3)$  
\\ \begin{tiny}
\noindent All constants of small index are in the domain.
\end{tiny}
\item $(\forall m)(n_{\gamma+1} \leq_\mb m \leq_\mb n_\alpha \implies y_{\gamma+1} \in c_\alpha(m,3) )$
\\ \begin{tiny}
\noindent And have stayed there ever since they were put in.
\end{tiny}
\item for all $\zeta+1 < \gamma+1$ and all $m$ such that $n_{\gamma+1} \leq_\mb m \leq_\ma n_\alpha$, 
\[ c_\alpha (m,4) (y_{\zeta+1}, y_{\gamma+1}) = d_{g(\zeta+1, \gamma+1)} \]
recalling $g$ is the external bookkeeping function fixed above. 
\\ \begin{tiny}
\noindent The distance estimate function assigns this pair %of constants 
to be at least as far apart as $| [0, d_{g(\zeta+1, \gamma+1)}]_\ma|$. Recall the distance estimate will not change 
as long as they both stay in the domain.
\end{tiny}
\end{itemize}
\end{enumerate}

\sbr
\noindent\underline{For $\alpha = \beta+1 < \kappa^+$}. For such $\alpha$, \emph{in addition} we choose $y_\alpha = y_{\beta+1} \in X_\ma$:

\begin{enumerate}[(i)]
\item \emph{to be new}:  ~~$y_{\beta+1} \in X_\ma \setminus \{ y_{\gamma+1} : \gamma < \beta \}$.

\item \emph{to be newly in the domain\footnote{Of course, it could have been in the domain earlier, we just need it to 
have fallen out for at least the previous step so that we are now free to set the distance estimate function as we wish.}}: ~~ $y_{\beta+1} \in c_\alpha(n_\alpha, 3)$ but $y_{\beta+1} \notin c_\beta(n_\beta, 3)$.

\item \emph{so that the domain stays small}: ~~ $|c_\alpha(n_\alpha, 3)| \leq^\cs |X_\ma \setminus c_\alpha(n_\alpha, 3)|$. 

\item \emph{so that the new distances are appropriate}:  ~~ for all $\gamma+1 < \beta+1$ and all $n$ such that 
$n_{\gamma+1} \leq_\mb n \leq_\mb n_\alpha$, we ask that  (recalling the bookkeeping function $g$)
\[ c_\alpha(n,4) (y_{\gamma+1}, y_{\beta+1}) = d_{g(\gamma+1, \beta+1)}.  \]
%where $g$ is the external bookkeeping function fixed above. 
\end{enumerate}

\noindent We omit here the proof of the inductive construction, which the reader can find in the proof of \cite{MiSh:998} Theorem 8.1 (with the 
exception of condensing the numbering of the inductive hypotheses, we have kept the 
same notation as that proof so it should be possible to read it directly).  The star ingredient is local saturation, which 
reduces existence to finite consistency.   

Suppose then that we have carried out our inductive construction and have chosen the $c_\alpha$'s, $n_\alpha$'s, and $y_{\beta+1}$'s 
for $\alpha < \lambda$ and $\beta < \kappa^+$.  Let us finish the proof. As $\lambda = \xp_\cs < \xt_\cs$, we may choose 
a treetop $c_\star \in \mct$ above the sequence $\langle c_\alpha : \alpha < \lambda \rangle$. Remember that $c_\star$ is a 
function from $X_\mb$ to $X_\mb$, so $\langle n_\alpha : \alpha < \lambda \rangle$ is a strictly increasing sequence in 
$X_\mb$ below $n_\star$. By uniqueness, the co-initiality of the sequence $\langle n_\alpha : \alpha < \lambda \rangle$ 
in the set $\{ n : n \leq_\mb n_\star \}$ is $\kappa$, so we may find a cut 
\[ (\langle n_\alpha : \alpha < \lambda \rangle, \langle m_\epsilon : \epsilon < \kappa \rangle) \mbox{ in $X_\mb$}. \] 
Recall our original cut $(\bar{d}, \bar{e})$. 
As $c_\star(n,0)$ is strictly decreasing in $X_\ma$ as $n$ increases, each $c_\star(m_\epsilon, 0)$ is $\leq_\ma$ some $d_\gamma$. 
Without loss of generality, we may choose an increasing function $\zeta : \kappa \rightarrow \kappa$ such that 
$d_{\zeta(\epsilon)} <_\ma c_\star(m_\epsilon, 0) <_\ma d_{\zeta(\epsilon+1)}$. Now let's see where the constants have landed. 
For each $\beta < \kappa^+$, let 
\[ X_\beta = \{ n : n \leq_\mb n_* \mbox{ and }(\forall n^\prime)(n_{\beta+1} \leq_\mb n^\prime \leq_\mb n \implies 
y_{\beta+1} \in c_\star(n^\prime, 3)) \}\] 
record how long $c_{\beta+1}$ stayed continuously in the domain. By construction, 
$X_\beta$ includes the interval $[n_\alpha, n_{\alpha^\prime}]_\mb$ for all $\beta < \kappa^+$ and all 
$\beta < \alpha < \alpha^\prime < \lambda$. So each $X_\beta$ has a maximal element which is above all $n_\alpha$, and 
so because $(\bar{n}, \bar{m})$ is a cut, for some $\epsilon(\beta) < \kappa$, 
\[  [ n_{\beta+1}, m_{\epsilon(\beta)}]_\mb \subseteq X_\mb. \]
Since there are $\kappa^+$-many $\beta$'s, there are $W\subseteq \kappa^+$ of size $\kappa^+$ 
and $\epsilon_\star < \kappa$ such that $\epsilon(\beta) = \epsilon_*$ for all $\beta$ in $W$. 
Let $F = c_\star(m_{\epsilon_*}, 4)$ be the distance estimate function there. By construction, for every $\beta \neq \gamma \in W$, 
$F(y_{\gamma}, y_\beta) = d_{g(\gamma, \beta)}$. By the choice of $g$, there are $\gamma, \beta \in W$ such that 
\[ |d_{\zeta(\epsilon_*)+1} | <^\cs |d_{g(\gamma, \beta)}|. \]
This contradiction completes the proof. 
\end{proof}

So we arrive to: 

\begin{theorem}[\cite{MiSh:998} Theorem 9.1] \label{t:main} 
Let $\cs$ be a cofinality spectrum problem. Then $\mcs(\cs, \xt_\cs) = \emptyset$. 
\end{theorem}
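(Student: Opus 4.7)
The plan is to derive this as a direct corollary of the two main lemmas that precede it: the Key Lemma ruling out asymmetric cuts below treetops and Lemma~\ref{symlem-CSP} ruling out symmetric cuts below treetops. Since all the combinatorial heavy lifting has already been done, the argument reduces to a careful case analysis on the minimal cut.

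First I would argue by contradiction and assume $\mcs(\cs, \xt_\cs) \neq \emptyset$. Pick any $(\kappa_1, \kappa_2) \in \mcs(\cs, \xt_\cs)$; then $\kappa_1 + \kappa_2 < \xt_\cs$, and by the definition of $\xp_\cs$ as the minimum of $\kappa_1 + \kappa_2$ over all cuts, this forces $\xp_\cs \leq \kappa_1 + \kappa_2 < \xt_\cs$. In particular, a cut witnessing this minimum exists, so I may replace my cut by one with $\kappa_1 + \kappa_2 = \xp_\cs$. Cuts in the spectrum are by definition indexed by regular cardinals, so without loss of generality $\kappa_1 \leq \kappa_2$, and cardinal arithmetic forces $\kappa_2 = \xp_\cs$.

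Now I would split into two cases. If $\kappa_1 = \kappa_2$, then setting $\lambda = \xp_\cs$ gives $\lambda \leq \xp_\cs$ and $\lambda < \xt_\cs$, so Lemma~\ref{symlem-CSP} yields $(\lambda, \lambda) \notin \mcs(\cs)$, contradicting the existence of our cut. If instead $\kappa_1 < \kappa_2 = \xp_\cs$, then setting $\kappa := \kappa_1$ and $\lambda := \xp_\cs$ gives regular cardinals with $\kappa < \lambda \leq \xp_\cs$ and $\lambda < \xt_\cs$, so the Key Lemma yields $(\kappa, \lambda) \notin \mcs(\cs, \xt_\cs)$, again a contradiction. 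Either way we conclude $\mcs(\cs, \xt_\cs) = \emptyset$.

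The main obstacle is not in this deduction but in the inputs. The Key Lemma carries essentially all the weight (the delicate induction with the growing set of distinguished constants $y_{\beta+1}$, the distance estimate function $x(n,4)$ respecting the internal cardinality order $\leq^\cs$, and the bookkeeping function $g$ of Fact~\ref{fact5} that forces the final contradiction by pigeonhole). The only subtleties at the present step are verifying that the Key Lemma's hypothesis $\lambda \leq \xp_\cs$ (not the strict inequality) is what lets us apply it when $\lambda = \xp_\cs$, and that Lemma~\ref{symlem-CSP}'s hypothesis similarly allows $\lambda = \xp_\cs$. With those boundary conditions checked, the theorem drops out as a one-page combination of the previous two sections.
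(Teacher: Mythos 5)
Your proof is correct and follows essentially the same route as the paper's: reduce to a cut of total size $\xp_\cs$, split into the symmetric and asymmetric cases, and invoke Lemma~\ref{symlem-CSP} and the Key Lemma respectively, both of which allow $\lambda = \xp_\cs$ thanks to the non-strict inequality in their hypotheses. The only cosmetic difference is that the paper frames it as showing $\xp_\cs < \xt_\cs$ is impossible (hence $\xt_\cs \leq \xp_\cs$), while you argue directly by contradiction from a nonempty spectrum; these are logically equivalent.
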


\begin{proof}
There are two cases. 
If $\xp_\cs < \xt_\cs$, suppose $\kappa, \lambda$ are such that $\kappa + \lambda = \xp_\cs$ and 
$(\kappa, \lambda) \in \mcs(\cs, \xt_\cs)$. We have seen that without loss of generality, 
$\kappa \leq \lambda$, and that neither the case $\kappa = \lambda$ nor $\kappa < \lambda$ can occur. 
So $\xp_\cs < \xt_\cs$ cannot occur.  
So $\xt_\cs \leq \xp_\cs$, and we are done. 
\end{proof}

\begin{cor}[\cite{MiSh:998} Theorem 14.1] $\xp = \xt$.
\end{cor}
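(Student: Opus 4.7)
The plan is to combine Theorem \ref{t:main} --- the ZFC theorem that $\mcs(\cs, \xt_\cs) = \emptyset$ for every CSP --- with the forcing construction of Section \ref{s:p-t}, exactly as set up by Corollary \ref{cor2}. First I would dispatch the easy direction $\xp \leq \xt$: any family linearly ordered by $\supseteq^*$ automatically has the strong finite intersection property, so any tower witnessing $\xt$ is also a witness to $\xp$. The content is therefore to rule out $\xp < \xt$.

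For the reverse inequality, I argue by contradiction. Suppose $\xp < \xt$ in a transitive model $\vv$ of ZFC. The construction of Section \ref{s:p-t} then produces, in the generic extension $\vv[\mathbf{G}]$ by $\mathbf{Q} = ([\mathbb{N}]^{\aleph_0}, \supseteq^*)$, a cofinality spectrum problem
\[ \mathbf{s} = (\mch(\aleph_1), M^\omega/\mathbf{G}, \mch(\aleph_1), M^\omega/\mathbf{G}, \Delta_{psf}) \]
satisfying $\xp_\cs \leq \xp < \xt \leq \xt_\cs$, by the two propositions of that section. In particular $\xp_\cs < \xt_\cs$, so Observation \ref{o:existence} applied to any nontrivial $\ma \in \ord(\cs)$ produces a $(\kappa,\theta)$-cut in $X_\ma$ with $\kappa + \theta = \xp_\cs < \xt_\cs$. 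This witnesses $\mcs(\cs, \xt_\cs) \neq \emptyset$ in $\vv[\mathbf{G}]$.

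Now $\vv[\mathbf{G}]$ is itself a model of ZFC, and Theorem \ref{t:main} is a ZFC statement, so it must hold there: $\mcs(\cs, \xt_\cs) = \emptyset$ in $\vv[\mathbf{G}]$. This directly contradicts the previous paragraph. Hence $\xp \not< \xt$ in $\vv$, and combined with $\xp \leq \xt$ we conclude $\xp = \xt$. The main obstacle at this stage is a bookkeeping point about absoluteness: one must confirm that forcing with $\mathbf{Q}$ preserves $\xp$ and $\xt$ (it does, being $\xt$-closed in $\vv$) so that the hypothesized strict inequality genuinely persists into the extension where the CSP is built. All the substantive model-theoretic and combinatorial content --- the internal arithmetic in CSPs, local saturation, uniqueness of the lower cofinality, and the Key Lemma excluding asymmetric cuts below treetops --- has already been absorbed into the proof of Theorem \ref{t:main}, so this final corollary really is a packaging step.
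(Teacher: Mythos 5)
Your proof is correct and follows exactly the route the paper itself lays out in Corollary \ref{cor2}: the trivial direction $\xp \leq \xt$, the forcing construction of Section \ref{s:p-t} producing a CSP $\cs$ in $\vv[\mathbf{G}]$ with $\xp_\cs \leq \xp < \xt \leq \xt_\cs$ (with the preservation of $\xp,\xt$ by the $\xt$-closed forcing $\mathbf{Q}$ and the reduction to $\aleph_1 < \xp$ via Rothberger handled as in the construction), and then a contradiction with Theorem \ref{t:main} applied inside $\vv[\mathbf{G}]$. One small imprecision: Observation \ref{o:existence} does not control the right-hand cofinality $\theta$, so it does not directly yield a cut of total size $\xp_\cs$; but you do not need it --- $\xp_\cs < \xt_\cs$ gives a $(\kappa_1,\kappa_2)$-cut with $\kappa_1+\kappa_2=\xp_\cs<\xt_\cs$ by the very definition of $\xp_\cs$, so $\mcs(\cs,\xt_\cs)\neq\emptyset$ follows immediately.
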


\begin{cor}[\cite{MiSh:998} Theorem 11.11] Any theory with $SOP_2$ is maximal in Keisler's order. 
\end{cor}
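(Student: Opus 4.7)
The plan is to chain together Theorem \ref{t:main} with the translations established in Sections 3 and 4. Fix a theory $T$ with $SOP_2$ and a regular ultrafilter $\de$ on some $\lambda$ which saturates $T$. To show $T$ is maximal in Keisler's order it suffices to show $\de$ is good: then by Keisler's argument recalled in Section 3, $\de$ saturates every countable complete theory, and so $T' \tlf T$ for every $T'$.

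First I use the one ingredient not carried out in this excerpt: by \cite[Lemma 11.6]{MiSh:998}, cited in Section 4, any regular $\de$ on $\lambda$ which saturates a theory with $SOP_2$ must have $\lambda^+$-treetops. Second, I package $\de$ as a cofinality spectrum problem along the lines of the footnote in Section 9: take $M^+ = (\mch(\aleph_1), \in)$ and let $M = (\omega, <)$ as defined in $M^+$; set $N = M^\lambda/\de$ with the parallel expansion $N^+$; and let $\Delta$ pick out the formulas defining discrete linear orders on initial segments of $\omega$ and their finite Cartesian powers (with G\"odel coding handling products). This produces a CSP $\cs = (M, N, M^+, N^+, \Delta)$ whose orders and trees are exactly those used in Section 4 and \S \ref{s:sym}. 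Under this translation the treetops hypothesis on $\de$ becomes $\xt_\cs \geq \lambda^+$, and the cut spectrum $\mc(\de)$ of Definition \ref{d:cut} is precisely $\mcs(\cs, \lambda^+)$.

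Now I apply Theorem \ref{t:main} to this $\cs$ to obtain $\mcs(\cs, \xt_\cs) = \emptyset$. Since $\lambda^+ \leq \xt_\cs$, in particular $\mcs(\cs, \lambda^+) = \emptyset$, i.e., $\mc(\de) = \emptyset$. By the Conclusion in Section 4 (equivalence of $\mc(\de) = \emptyset$, $\de$ saturating $(\omega, <)$, and $\de$ being good) this gives that $\de$ is good, as desired.

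The main obstacle is the one step I assume rather than prove, namely the implication ``$\de$ saturates some $T$ with $SOP_2$ $\Rightarrow$ $\de$ has $\lambda^+$-treetops'' of \cite[Lemma 11.6]{MiSh:998}. This is where $SOP_2$ actually enters the picture: one codes any strictly $\tlf$-increasing path in the ultrapower of a tree of cofinality $\leq \lambda$ lacking an upper bound as a consistent $SOP_2$-type which would then have to be omitted, contradicting saturation of $T$. Everything after that link is routine: treetops in $\de$ gives $\xt_\cs \geq \lambda^+$, Theorem \ref{t:main} kills all cuts below treetops, and the Section 4 dictionary turns ``no cuts'' into ``good,'' which by Keisler's 1967 argument of Section 3 places $T$ in the maximum class.
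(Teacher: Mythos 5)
Your proposal is correct and follows exactly the chain the paper sets up: from saturating an $SOP_2$ theory to $\lambda^+$-treetops via \cite[Lemma 11.6]{MiSh:998}, then packaging $\de$ as a CSP so that Theorem \ref{t:main} forces $\mc(\de) = \emptyset$, which via the Section~4 dictionary makes $\de$ good, and hence $T$ maximal by Keisler's argument. You also correctly flag the one imported ingredient (Lemma 11.6) and sketch the right mechanism for it.
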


The paper \cite{MiSh:998} contains much that we haven't covered here, and several further consequences of Theorem \ref{t:main},  
including a new characterization of good ultrafilters.  

The reader may wonder: is this a one-time interaction of model theory and set theory or a beginning? In the sixties 
there was much interaction, but less later. These are 
exciting questions. The reader may wish to look at the recent paper of open problems \cite{MiSh:1069}. 
These arise largely from the 
methods and proofs described above,  rather than just the definitions and theorems, 
and it seems there is much more to be said.

\subsection*{Acknowledgments}
These notes were written as part of the NSF-funded Appalachian Set Theory workshop (DMS-1439507).  
M.M. was partially supported by NSF CAREER award 1553653. 
Thanks to S. Shelah for some very helpful comments on the notes.

\newpage 

%\section*{Appendix}
%\begin{center}
%\includegraphics[width=130mm]{David-picture.pdf}
%\end{center}
%\begin{footnotesize}
%\hfill{Picture by D. Casey of the tree condition and the setup for a contradiction in the proof of 16.2.}
%\end{footnotesize}
%\newpage

\end{document}